\documentclass[reqno]{amsart}

\usepackage{enumerate}
\usepackage{comment}
\usepackage{longtable}
\usepackage{empheq}
\usepackage{amsmath}
\usepackage{amssymb}
\usepackage{array}
\usepackage{graphicx}
\usepackage{tikz}
\usepackage{pgf}
\usepackage{pgfpages}
\usepackage{comment}
\usetikzlibrary{decorations.pathreplacing}
\usetikzlibrary{calc}
\usetikzlibrary{decorations}

\usetikzlibrary{matrix}
\tikzstyle{vertex}=[circle,thin,draw=black!100,fill=black!100, inner sep=0pt, minimum width=4pt]
\tikzstyle{vertexg}=[circle, draw=black, inner sep=1pt, style=densely dotted, minimum width=4pt]
\tikzstyle{vertexinf}=[circle,thin,draw=black!100,fill=white!100, inner sep=0pt, minimum width=10pt]
\tikzstyle{pedge}=[draw=black!100,-]
\tikzstyle{nedge}=[draw=black!100,densely dashed]
\tikzstyle{gedge}=[draw=black!100,densely dotted]

\newtheorem{theorem}{Theorem}
\newtheorem{proposition}[theorem]{Proposition}
\newtheorem{lemma}[theorem]{Lemma}
\newtheorem{corollary}[theorem]{Corollary}
\newtheorem{definition}{Definition}

\begin{document}

\title[$1$-Salem graphs]{A classification of all $1$-Salem graphs}
\author{Lee Gumbrell}
\address{Department of Mathematics, Royal Holloway, University of
London, Egham Hill, Egham, Surrey, TW20 0EX, England, UK.}
\email{Lee.Gumbrell.2009@rhul.ac.uk}
\author{James McKee}
\address{Department of Mathematics, Royal Holloway, University of
London, Egham Hill, Egham, Surrey, TW20 0EX, England, UK.}
\email{James.McKee@rhul.ac.uk}

\subjclass{11R06, 05C50}
\keywords{Salem graphs, graph spectra}

\begin{abstract}
One way to study certain classes of polynomials is by considering examples that are attached to combinatorial objects.
Any graph $G$ has an associated reciprocal polynomial $R_G$, and with two particular classes of reciprocal polynomials in mind one can ask the questions: (a) when is $R_G$ a product of cyclotomic polynomials (giving the \emph{cyclotomic graphs})? (b) when does $R_G$ have the minimal polynomial of a Salem number as its only non-cyclotomic factor (the non-trival \emph{Salem graphs})?
Cyclotomic graphs were classified by Smith in 1970.
Salem graphs are `spectrally close' to being cyclotomic, in that nearly all their eigenvalues are in the critical interval $[-2,2]$.
On the other hand Salem graphs do not need to be `combinatorially close' to being cyclotomic: the largest cyclotomic induced subgraph might be comparatively tiny.

We define an $m$-Salem graph to be a connected Salem graph $G$ for which $m$ is minimal such that there exists an induced cyclotomic subgraph of $G$ that has $m$ fewer vertices than $G$.
The $1$-Salem subgraphs are both spectrally close and combinatorially close to being cyclotomic.
Moreover, every Salem graph contains a $1$-Salem graph as an induced subgraph, so these $1$-Salem graphs provide some necessary substructure of all Salem graphs.  The main result of this paper is a complete combinatorial description of all $1$-Salem graphs: there are $26$ infinite families and $383$ sporadic examples.
\end{abstract}

\maketitle

\section{Motivation}\label{S:motivation}
\subsection{The remarkable interval $[-2,2]$}\label{SS:interval}

An algebraic integer $\theta$ is called \emph{totally real} if all its Galois conjugates are real.
One can then define the real interval $I_\theta = [\theta_{min},\theta_{max}]$, where $\theta_{min}$ and $\theta_{max}$ are the smallest and largest of the Galois conjugates of $\theta$.

The number $4$ plays a peculiar role in the theory of totally real algebraic integers.
If $J$ is any interval of length strictly less than $4$, then there exist only finitely many totally real $\theta$ such that $I_\theta\subseteq J$ (proved by Schur for $J$ centred on the origin, and extended to the general case by P\'{o}lya in a footnote to Schur's paper \cite{Sc1918}).
On the other hand, if $J$ is any interval of length strictly greater than $4$, then there exist infinitely many totally real $\theta$ such that $I_\theta\subseteq J$ (Robinson, \cite{Ro1962}).
If $J$ has length exactly $4$ and the endpoints are integers, then there are infinitely many such $\theta$.
If $J=[a,a+4]$ with $a$ an integer, and $I_\theta\subseteq J$, then $I_{\theta-a-2}\subseteq [-2,2]$, so that essentially the only such $J$ of interest is the special interval $[-2,2]$.
For other intervals of length $4$ nothing is known.

By a theorem of Kronecker \cite{Kr1857}, the only $\theta$ for which $I_\theta\subseteq[-2,2]$ are those of the form $\theta = \zeta+1/\zeta$ for $\zeta$ some (complex) root of $1$.
For this reason one sometimes refers to this as the \emph{cyclotomic} case.

\subsection{Algebraic integers from combinatorial objects}

One approach to the study of algebraic integers is to look for examples attached to combinatorial objects (graphs, knots, etc.).
The eigenvalues of graphs are totally real, and provide a fruitful hunting-ground for such algebraic integers.
Indeed Estes \cite{Es1992} showed that all totally real algebraic integers arise as eigenvalues of graphs (Hoffman \cite{Ho1972, Ho1975} having shown that the problem was equivalent to that for eigenvalues of integer symmetric matrices).
On the other hand it is easy to find examples of totally real $\theta$ for which the minimal polynomial of $\theta$ is not the characteristic polynomial of a graph, and with more work one can find examples where the minimal polynomial of $\theta$ is not even the \emph{minimal} polynomial of any integer symmetric matrix (\cite{Do2008, McK2010}).

If $G$ is a graph, we let $\chi_G$ denote its characteristic polynomial, and define $R_G$ to be the associated reciprocal polynomial $R_G(z):= z^{\deg \chi_G} \chi_G(z+1/z)$.
For any set $S$ of reciprocal polynomials of interest, there is a correspondingly interesting set of graphs $G$ for which $R_G\in S$.

\subsection{Cylotomic graphs}

After \S\ref{SS:interval}, the question of which graphs have all eigenvalues in the interval $[-2,2]$ acquires peculiar interest, and such graphs we call \emph{cyclotomic}.
(The reason for this name is that after Kronecker's theorem the associated reciprocal polynomial of such a graph is a product of cyclotomic polynomials.)
It is enough to consider connected examples, and Smith \cite{Sm1970} gave a complete classification.
The maximal connected examples comprise two infinite families and three sporadic cases: see Figure \ref{F:cyclograph}.

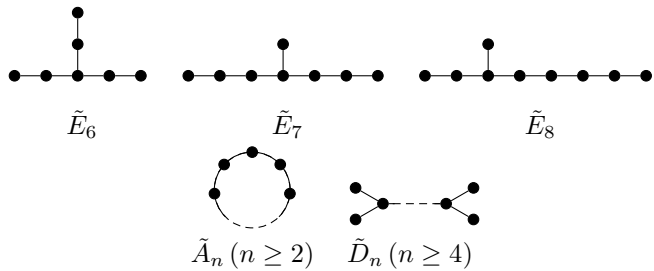
\begin{figure}[h]
\begin{center}
\begin{tabular}{ccc}
\begin{tikzpicture}[scale=0.42, auto]
\foreach \pos/\name in
{{(-2,0)/a},{(-1,0)/b},{(0,0)/c},{(1,0)/d},{(2,0)/e},{(0,1)/f},{(0,2)/g}}
\node[vertex] (\name) at \pos {};
\node[] at (2.2,-0.5) {};
\foreach \edgetype/\source/ \dest in {pedge/a/b,pedge/b/c,pedge/c/d,pedge/d/e,pedge/c/f,pedge/f/g}
\path[\edgetype] (\source) -- (\dest);
\end{tikzpicture}
&
\begin{tikzpicture}[scale=0.42, auto]
\foreach \pos/\name in
{{(-2,0)/a},{(-1,0)/b},{(0,0)/c},{(1,0)/d},{(2,0)/e},{(0,1)/f},{(-3,0)/g},{(3,0)/h}}
\node[vertex] (\name) at \pos {};
\node[] at (3.2,-0.5) {};
\foreach \edgetype/\source/ \dest in {pedge/a/b,pedge/b/c,pedge/c/d,pedge/d/e,pedge/c/f,pedge/a/g,pedge/e/h}
\path[\edgetype] (\source) -- (\dest);
\end{tikzpicture}
&
\begin{tikzpicture}[scale=0.42, auto]
\foreach \pos/\name in
{{(-2,0)/a},{(-1,0)/b},{(0,0)/c},{(1,0)/d},{(2,0)/e},{(0,1)/f},{(3,0)/g},{(4,0)/h},{(5,0)/i}}
\node[vertex] (\name) at \pos {};
\node[] at (5.2,-0.5) {};
\foreach \edgetype/\source/ \dest in {pedge/a/b,pedge/b/c,pedge/c/d,pedge/d/e,pedge/c/f,pedge/e/g,pedge/h/g,pedge/h/i}
\path[\edgetype] (\source) -- (\dest);
\end{tikzpicture}
\\
$\tilde{E}_6$ & $\tilde{E}_7$ & $\tilde{E}_8$
\end{tabular}
\begin{tabular}{cc}
\begin{tikzpicture}[scale=0.42, auto]
\draw[densely dashed] (0,0) circle (1.2cm);
\draw (1.2,0) arc (0:-45:1.2cm);
\draw (1.2,0) arc (0:225:1.2cm);
\foreach \pos/\name in
{{(0,1.2)/a},{(-0.888,0.807)/b},{(0.888,0.807)/c},{(-1.194,-0.115)/d},{(1.194,-0.115)/d}}
\node[vertex] (\name) at \pos {};
\end{tikzpicture}
&
\begin{tikzpicture}[scale=0.42, auto]
\foreach \pos/\name in
{{(0,0)/a},{(0.866,0.5)/b},{(0.866,-0.5)/c},{(-2,0)/aa},{(-2.866,0.5)/ab},{(-2.866,-0.5)/ac}}
\node[vertex] (\name) at \pos {};
\foreach \pos/\name in
{{(-0.667,0)/ax},{(-1.333,0)/ay}}
\node[] (\name) at \pos {};
\node[] at (-3.066,-0.5) {};
\foreach \edgetype/\source/ \dest in {pedge/a/b,pedge/c/a,pedge/a/ax,pedge/ay/aa,pedge/aa/ab,pedge/aa/ac,nedge/a/aa}
\path[\edgetype] (\source) -- (\dest);
\end{tikzpicture}
\\
$\tilde{A}_n \left(n \geq 2\right)$ & $\tilde{D}_n \left(n \geq 4\right)$
\end{tabular}
\caption{The maximal connected cyclotomic graphs.  Note that each graph has one more vertex than the subscript in its name.}
\label{F:cyclograph}
\end{center}
\end{figure}

\subsection{Salem numbers and Salem graphs: definitions}

A \emph{Salem number} is a real algebraic integer $\tau$ such that: (i) $\tau>1$; (ii) all Galois conjugates of $\tau$ other than $\tau$ itself lie in the unit disc $\{z\in\mathbb{C} : |z|\le 1\}$; (iii) at least of the Galois conjugates of $\tau$ has modulus exactly $1$.
It follows that the minimal polynomial of $\tau$ is reciprocal, $\tau$ has even degree at least $4$, and all of the Galois conjugates of $\tau$ other than $\tau$ and $1/\tau$ lie on the unit circle $\{z\in\mathbb{C} : |z|= 1\}$.

It is not known whether there are Salem numbers arbitrarily close to $1$.
The smallest known Salem number is $1.17628\dots$, the larger real root of the reciprocal polynomial $L(z) = z^{10} + z^9 - z^7 - z^6 - z^5 - z^4 - z^3 + z +1$.
This polynomial was discovered by considering examples attached to combinatorial objects: $L(-z)$ is the Alexander polynomial of a pretzel knot.

One can ask when a graph has as its reciprocal polynomial the minimal polynomial of a Salem number, and one might call these `Salem graphs'.
Since irreducibility of the reciprocal polynomial is not easy to capture combinatorially, it is natural to relax this definition slightly, and merely ask that the reciprocal polynomial has all but two roots on the unit circle.
Such graphs cannot be bipartite: the eigenvalues of a bipartite graph are symmetric about $0$, and the reciprocal polynomial as defined above cannot then have a unique root outside the unit disc.
The closest that a non-cyclotomic bipartite graph can get to being cyclotomic (measured in terms of the number of eigenvalues outside the critical interval $[-2,2]$) is to have a single root greater than $2$ and hence also a root less than $-2$.
In such cases we can exploit the symmetry of the eigenvalues to recover a polynomial whose roots are in the right place to be the minimal polynomial of a Salem number (but with no guarantee of irreducibility): we redefine $R_G(z) = z^{\deg(\chi_G)/2}\chi_G(\sqrt{z} + 1/\sqrt{z})$ for bipartite graphs.

This discussion motivates the following definition, which might otherwise seem unnecessarily convoluted.
We follow a common convention that if $G$ has $n$ vertices then its eigenvalues are labelled $\lambda_1\ge\lambda_2\ge\cdots\ge\lambda_n$.

\begin{definition}\label{D:Salem}
A bipartite graph $G$ is called a \emph{Salem graph} if the largest eigenvalue 
$\lambda_1$ is greater than $2$, and the remaining $n-1$ eigenvalues are no greater than 2. Then $\lambda_n=-\lambda_1<-2$.  A bipartite Salem graph is called \emph{trivial} if $\lambda_1^2\in \mathbb{Z}$.  The associated number $\tau\left(G\right)$ is the larger root of $\sqrt{z}+1/\sqrt{z}=\lambda_1$; this is a Salem number unless $G$ is trivial.

A non-bipartite graph $G$ is called a \emph{Salem graph} if the largest eigenvalue  $\lambda_1 > 2$ and the remaining $n-1$ eigenvalues are in the interval $\left[-2,2\right]$. A non-bipartite Salem graph is called \emph{trivial} if $\lambda_1\in \mathbb{Z}$.  The associated number $\tau\left(G\right)$ is the larger root of $z+1/z=\lambda_1$; this is a Salem number unless $G$ is trivial.
\end{definition}

This notion of a Salem graph was introduced in \cite{MS2005}.
Special cases of Salem graphs (or associated Coxeter transformations) and their connection with Salem numbers were studied in earlier (and later) papers: \cite{CW1992}, \cite{P1993}, \cite{MRS1999}, \cite{L1999}, \cite{M2000}, \cite{L2001}, \cite{M2002}, \cite{MS2004}, \cite{L2010}.
The highlight of \cite{MS2005} was a proof that all limit points of the set of graph Salem numbers are Pisot numbers, confirming a conjecture of Boyd \cite{B1977} in this setting.  Limit points of special subsets of the set of graph Salem numbers are also considered in \cite{MRS1999}, \cite{L1999}, \cite{M2000}, \cite{MS2004} and \cite{S2008}.  Many examples of Salem graphs and their spectra have been considered in other contexts, for example \cite{Z1989}, \cite{CDG1982}, \cite{MS2011}.

\subsection{The structure of Salem graphs: statement of the 
theorem}

A number of partial results concerning the structure of Salem graphs appear in \cite[Section 3]{MS2005}, and a complete description of all Salem trees in \cite[Theorem 7.2]{MS2005}, but a combinatorial description of all Salem graphs is still unknown.
In this paper we increase our understanding of the set of Salem graphs by considering how close they are to being cyclotomic in the following combinatorial sense: how many vertices do we need to delete to produce an induced cyclotomic subgraph?
We call a connected Salem graph $G$ an \emph{$m$-Salem graph} if $m$ is minimal such that there exists a set of $m$ vertices $v_1, \ldots, v_m$ for which the induced graph $G\setminus \left\{v_1, \ldots, v_m\right\}$ is cyclotomic.
The $1$-Salem graphs are both spectrally and combinatorially close to being cyclotomic: there is a single eigenvalue greater than $2$, and there exists some vertex whose deletion leaves a cyclotomic induced subgraph.
Our main theorem is a complete classification of this especially interesting subset of the Salem graphs.
We remark that any Salem graph contains a $1$-Salem graph as an induced subgraph, so that our classification reveals something of the necessary substructure of an arbitrary Salem graph.
It is worth observing that our definition requires an $m$-Salem graph to be connected, but that when considering induced cyclotomic subgraphs these subgraphs need not be connected.

The description of bipartite $1$-Salem graphs is rather trivial, and is essentially contained in \cite{MS2005}. For non-bipartite Salem graphs all eigenvalues are at least $-2$, and Cameron \textit{et al} (\cite{CGSS1976}) showed that the connected graphs with this property are either generalized line graphs or belong to a finite list of graphs represented in the root system $E_8$. The latter are disposed of by a finite computation.  The main work in our classification comes from a description of the $1$-Salem graphs that are generalised line graphs. We summarise our 
results in the following Theorem.

\begin{theorem} \label{T:summary}
\begin{enumerate}[(i)]
\item All bipartite $1$-Salem graphs are described by Theorem \ref{T:bipartite} (\S\ref{S:bipartite}).
\item Every $1$-Salem graph that is a generalised line graph is described by Theorem \ref{T:glg_result}; there are $25$ infinite families and $6$ sporadic examples (\S\ref{S:glgs}).
\item There are $377$ non-bipartite $1$-Salem graphs that are not generalised line graphs: see Theorem \ref{T:e8_result} (\S\ref{S:E8}).
\end{enumerate}
\end{theorem}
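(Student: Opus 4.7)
The proof of Theorem \ref{T:summary} is structural: since the statement only assembles three later results, my plan is to justify the trichotomy on which that assembly rests and to explain why it is exhaustive. Let $G$ be a $1$-Salem graph. I would first branch on whether $G$ is bipartite. If $G$ is bipartite, then $\lambda_n = -\lambda_1 < -2$ by Definition \ref{D:Salem}, and the classification reduces to understanding which bipartite graphs have exactly one eigenvalue in $(2,\infty)$ and exactly one eigenvalue in $(-\infty,-2)$ while admitting a cyclotomic induced subgraph obtained by deleting a single vertex. This is a quick adaptation of prior structural results from \cite{MS2005}, and the detailed statement is deferred to Theorem \ref{T:bipartite} in Section \ref{S:bipartite}.

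If instead $G$ is non-bipartite, then by Definition \ref{D:Salem} every eigenvalue $\lambda_i$ with $i \ge 2$ satisfies $\lambda_i \in [-2,2]$, so in particular the smallest eigenvalue of $G$ is at least $-2$. At this point I would invoke the theorem of Cameron, Goethals, Seidel and Shult \cite{CGSS1976}: every connected graph with smallest eigenvalue $\ge -2$ is either a generalised line graph or is representable in the root system $E_8$, and in the latter case the possibilities are bounded in size (giving only finitely many candidates). This dichotomy partitions the non-bipartite $1$-Salem graphs into exactly the two classes treated by parts (ii) and (iii) of the theorem.

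For the generalised line graph case, I would refer forward to Theorem \ref{T:glg_result} in Section \ref{S:glgs}, which enumerates $25$ infinite families and $6$ sporadic examples. This is the heart of the argument: given a generalised line graph $L(H; a_1,\ldots,a_n)$ of a multigraph, one must characterise combinatorially when the line graph has exactly one eigenvalue exceeding $2$ and when deleting a single vertex produces a cyclotomic graph, which becomes an extremal problem on the underlying multigraph $H$. This is the step I expect to be by far the main obstacle, since it is the only one where the classification genuinely requires new combinatorial work rather than reference to or computation from an existing finite list.

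For the remaining case, $G$ is a non-bipartite, non-generalised-line graph whose smallest eigenvalue is at least $-2$; by \cite{CGSS1976}, $G$ lies in a fixed finite set of $E_8$-representable graphs. Thus determining which of these are $1$-Salem (and which vertex to delete to verify the $1$-Salem condition) is a bounded exhaustive search, producing the $377$ examples of Theorem \ref{T:e8_result}. Assembling the three parts gives a complete classification and confirms the totals $26 = 1+25$ infinite families and $383 = (\text{bipartite sporadic count}) + 6 + 377$ sporadic graphs advertised in the abstract. $\blacksquare$
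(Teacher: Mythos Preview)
Your proposal is correct and follows essentially the same route as the paper: branch on bipartite versus non-bipartite, then in the non-bipartite case invoke Cameron--Goethals--Seidel--Shult \cite{CGSS1976} to split into generalised line graphs (handled structurally) and a finite set of $E_8$-representable exceptions (handled by computer search). Two small remarks: your parenthetical ``(bipartite sporadic count)'' is simply $0$, since Theorem~\ref{T:bipartite} gives a single infinite family with no sporadic leftovers; and your sketch of the generalised-line-graph step as ``an extremal problem on the underlying multigraph $H$'' is not quite how the paper proceeds---it works instead with the GCP edge-partition of Theorem~\ref{T:glgcond} together with the $M\cup A\cup H$ decomposition of Proposition~\ref{L:hamlemma}---but that concerns the proof of Theorem~\ref{T:glg_result} rather than the assembly argument for Theorem~\ref{T:summary}.
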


In Section \ref{S:background} we state some definitions and results that we will need to prove the three components of Theorem \ref{T:summary}.
The proof of Theorem \ref{T:summary} is split over three sections: Section \ref{S:bipartite} for the bipartite graphs; Section \ref{S:glgs} for the generalised line graphs; and Section \ref{S:E8} for the non-bipartite graphs that are not generalised line graphs.

\section{Background}\label{S:background}

For convenience we list some standard definitions and results that we shall exploit in the proof of Theorem \ref{T:summary}.
We refer to the first result as `interlacing'.  It allows us to bound the second-largest eigenvalue of a graph by the index of any induced subgraph obtained by deleting a single vertex.

\begin{theorem}[Cauchy \cite{Ca1829}; or see \cite{GR2001}, Theorem 9.1.1] \label{T:interlacing}
Let $G$ be an $n$-vertex graph with vertex set $V\left(G\right)$ and eigenvalues $\lambda_1 \geq \lambda_2 \geq \ldots \geq \lambda_n$. Also, let $H$ be the induced graph on $V\left(G\right)\backslash\left\{v\right\}$ obtained from $G$ by deleting the vertex $v$ and its incident edges.
Then the eigenvalues $\mu_1 \geq \mu_2 \geq \ldots \geq \mu_{n-1}$ of $H$ interlace with those of $G$; that is
\begin{displaymath}
\lambda_1 \geq \mu_1 \geq \lambda_2 \geq \mu_2 \geq \ldots \geq \mu_{n-1} \geq \lambda_n.
\end{displaymath}
\end{theorem}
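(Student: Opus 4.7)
The plan is to reduce the statement to its matrix-theoretic analogue: if $A$ is a real symmetric $n \times n$ matrix (here the adjacency matrix of $G$) and $B$ is the principal $(n-1) \times (n-1)$ submatrix obtained by deleting the row and column indexed by $v$ (the adjacency matrix of $H$), then the eigenvalues of $B$ interlace those of $A$. Once this matrix version is in hand, the graph statement is immediate.

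The key tool is the Courant--Fischer min-max characterization: for a real symmetric $n \times n$ matrix $A$ with eigenvalues $\lambda_1 \ge \cdots \ge \lambda_n$ and any $k$,
\begin{equation*}
\lambda_k \;=\; \max_{\substack{S \le \mathbb{R}^n \\ \dim S = k}} \;\min_{\substack{x \in S \\ x \ne 0}} \frac{x^\top A x}{x^\top x} \;=\; \min_{\substack{T \le \mathbb{R}^n \\ \dim T = n-k+1}} \;\max_{\substack{x \in T \\ x \ne 0}} \frac{x^\top A x}{x^\top x}.
\end{equation*}
I would quote this as a known fact (or sketch the standard spectral theorem proof) and use the natural embedding $\iota : \mathbb{R}^{n-1} \hookrightarrow \mathbb{R}^n$ that inserts a $0$ in the coordinate corresponding to $v$; this has the crucial property that $(\iota y)^\top A (\iota y) = y^\top B y$ and $\|\iota y\| = \|y\|$, so Rayleigh quotients are preserved.

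For the upper bound $\mu_k \le \lambda_k$, I would take a $k$-dimensional subspace $S \le \mathbb{R}^{n-1}$ attaining the max for $\mu_k$ in the first form of Courant--Fischer; its image $\iota(S)$ is a $k$-dimensional subspace of $\mathbb{R}^n$, so
\begin{equation*}
\lambda_k \;\ge\; \min_{0 \ne y \in S} \frac{(\iota y)^\top A (\iota y)}{(\iota y)^\top (\iota y)} \;=\; \min_{0 \ne y \in S} \frac{y^\top B y}{y^\top y} \;=\; \mu_k.
\end{equation*}
For the lower bound $\lambda_{k+1} \le \mu_k$, I would symmetrically use the second form: take $T \le \mathbb{R}^{n-1}$ of dimension $(n-1)-k+1 = n-k$ attaining the min for $\mu_k$, lift to $\iota(T) \le \mathbb{R}^n$ of the same dimension $n-k = n-(k+1)+1$, and apply the min-max formula for $\lambda_{k+1}$ to conclude $\lambda_{k+1} \le \max_{y \in T} y^\top B y / y^\top y = \mu_k$. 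Chaining the inequalities for $k = 1, \dots, n-1$ yields the full interlacing chain.

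Because this is a classical result, there is no real obstacle beyond correctly setting up the min-max principle and being careful with the dimension bookkeeping in the lifting argument. The only point that requires a moment's thought is checking that the two halves of Courant--Fischer are used consistently so that the lifted subspaces have precisely the right dimensions to appear in the min-max formulas for $A$; the embedding $\iota$ then does all the work.
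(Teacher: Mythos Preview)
Your argument is correct and is the standard Courant--Fischer proof of Cauchy interlacing. Note, however, that the paper does not actually prove this theorem: it is stated in the Background section with attributions to Cauchy and to Godsil--Royle, and is used as a black box throughout. So there is no ``paper's own proof'' to compare against; your proposal simply supplies what the paper omits by citation.
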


We also need the following definitions from graph theory: see \cite{CRS2004} or \cite{GR2001}. Let $G$ be an $n$-vertex graph or multigraph. The \emph{line graph} $L\left(G\right)$ is the graph whose vertices are the edges of $G$, with two vertices in $L\left(G\right)$ adjacent whenever the corresponding edges in $G$ have exactly one vertex in common. The \emph{root graph} of a line (multi)graph $L\left(G\right)$ is simply $G$ itself (with a small number of exceptions, $G$ is uniquely determined by $L(G)$). Now let $a_1, \ldots ,a_n$ be non-negative integers. The \emph{generalized line graph} $L\left(G;a_1, \ldots ,a_n\right)$ is the graph $L(\hat{G})$, where $\hat{G}$ is the multigraph $G\left(a_1, \ldots ,a_n\right)$ obtained from $G$ by adding $a_i$ pendant 2-cycles at vertex $v_i$ $\left(i=1,\ldots,n\right)$. An \emph{internal path} of a graph $G$ is a sequence of vertices $v_1, \ldots, v_k$ of $G$ such that all vertices are distinct (except possibly $v_1$ and $v_k$), $v_i$ is adjacent to $v_{i+1}$ (for $i=1,\ldots,k-1$), $v_1$ and $v_k$ have degree at least $3$, and all of $v_2,\ldots,v_{k-1}$ have degree $2$.

We define a \emph{generalized cocktail party graph (GCP)} as a graph isomorphic to a complete graph (or clique) but with some independent edges removed; that is a graph where all vertices have degree $n-1$ or $n-2$. We will use $GCP\left(n,m\right)$ to denote the GCP on $n$ vertices with $m$ edges removed, where $0\leq m\leq\lfloor n/2\rfloor$. In $GCP\left(n,m\right)$, we will refer to a vertex of degree $n-1$ as being `of maximal degree'.

\begin{theorem}[see \cite{CRS2004}, Theorem 2.3.1 and Theorem 2.1.1] \label{T:glgcond} 
A connected graph is a generalized line graph if and only if its edges can be partitioned into GCPs such that
\begin{enumerate}[(i)]
	\item two GCPs have at most one common vertex;
	\item each vertex is no more than two GCPs;
	\item if two GCPs have a common vertex, then it is of maximal degree in both of them.
\end{enumerate}
Also, a graph is a line graph if and only if its edges can be partitioned in such a way that every edge is in a clique and no vertex is in more than two cliques.
\end{theorem}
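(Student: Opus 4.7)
The plan is to prove both equivalences by constructing the correspondence in each direction; the line-graph characterisation will emerge as the special case in which no pendant digons are present ($a_i=0$ for all $i$) and every GCP in the partition is already a clique.

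For the forward direction, I would start with $G=L(\hat{G})$ where $\hat{G}=G_0(a_1,\ldots,a_n)$. For each vertex $v_i$ of the underlying simple graph $G_0$, let $E_i$ be the set of vertices of $G$ corresponding to the edges of $\hat{G}$ incident with $v_i$; this set contains the $\deg_{G_0}(v_i)$ ordinary edges at $v_i$ together with the $2a_i$ edges that make up the digons at $v_i$. Any two edges in $E_i$ share the vertex $v_i$, so they are adjacent in the line graph unless they are a pair of parallel edges in a common digon (which share both endpoints and so are \emph{not} adjacent in $L(\hat{G})$). Consequently the induced subgraph on $E_i$ is $GCP\left(\deg_{G_0}(v_i)+2a_i,\,a_i\right)$, with the $a_i$ removed edges being precisely the digon pairs and the ordinary edges at $v_i$ being of maximal degree. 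Every edge of $G$ joins two $\hat{G}$-edges that share exactly one vertex, and that shared vertex determines the unique $E_i$ containing the edge; the $E_i$ therefore partition $E(G)$. Conditions (i) and (iii) hold because $G_0$ has at most one edge between any two vertices and such a shared edge is of maximal degree in the two GCPs it lies in; condition (ii) holds because each edge of $\hat{G}$ has only two endpoints.

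For the reverse direction, I would start with an edge partition $\{C_1,\ldots,C_k\}$ of $G$ into GCPs satisfying (i)--(iii) and reconstruct $\hat{G}$ as follows. Assign a vertex $w_j$ to each $C_j$; for every vertex $x$ of $G$ lying in two GCPs $C_j$ and $C_k$, put an edge $w_j w_k$ in $\hat{G}$, where condition (i) guarantees these edges are simple. The crucial step is the treatment of removed matchings: if $\{x,y\}$ is a non-adjacent pair inside some $C_j$, then $x$ and $y$ are both of non-maximal degree in $C_j$; condition (iii) then forces each of $x,y$ to lie in $C_j$ only, so one may legitimately attach a pendant digon at $w_j$ whose two parallel edges are identified with $x$ and $y$. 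Finally, any vertex of $G$ lying in a single $C_j$ and of maximal degree there corresponds to an ordinary edge from $w_j$ to a new pendant vertex of $G_0$. A GCP-by-GCP verification of adjacencies then shows that $G\cong L(\hat{G})$ and that $\hat{G}$ has the required form $G_0(a_1,\ldots,a_n)$.

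The main technical obstacle is the reverse direction, and specifically the claim that every removed-matching pair $\{x,y\}$ in some $C_j$ can be reliably interpreted as a single pendant digon at $w_j$: without condition (iii), one of $x,y$ could be shared with another GCP, ruining the digon interpretation and indeed spoiling the whole reconstruction. The line-graph statement drops out of the same argument by restricting attention to partitions whose GCPs have empty removed matching: the GCPs are then cliques, no digons are introduced, and $\hat{G}=G_0$ is already simple, recovering the classical Krausz-type characterisation.
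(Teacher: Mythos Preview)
The paper does not give its own proof of this theorem: it is stated in the Background section with an explicit citation to \cite{CRS2004}, Theorems~2.3.1 and~2.1.1, and is used thereafter as a black box. So there is no ``paper's proof'' to compare against.

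That said, your outline is essentially the standard argument one finds in the cited source, and it is correct. The forward direction is fine: for each vertex $v_i$ of the base graph $G_0$ the edges of $\hat{G}$ incident with $v_i$ form a $GCP(\deg_{G_0}(v_i)+2a_i,a_i)$ in $L(\hat{G})$, and every edge of $L(\hat{G})$ lies in exactly one such GCP because two $\hat{G}$-edges that are adjacent in the line graph share exactly one endpoint, necessarily a vertex of $G_0$. For the reverse direction your reconstruction of $\hat{G}$ is the right one; the only places where a careful reader would want more detail are (a) the verification that the map from vertices of $G$ to edges of $\hat{G}$ is a bijection that preserves both adjacency and non-adjacency (you assert ``a GCP-by-GCP verification'' but the non-adjacency half needs condition~(i) to rule out two vertices in a common $C_j$ also sharing a second GCP), and (b) the observation that condition~(iii) is exactly what guarantees each non-maximal-degree vertex lies in a unique GCP, which you correctly identify as the crux. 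Your remark that the line-graph case is the specialisation to clique partitions (no removed matchings, hence no digons) is exactly right and recovers Krausz's theorem.
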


A further important tool in the proof of Theorem \ref{T:glg_result} is the following:

\begin{proposition}[\cite{MS2005}, Proposition 3.2] \label{L:hamlemma}
Let $G$ be a connected graph with $\lambda_1>2$ and $\lambda_2\leq2$, then the vertices $V$ of $G$ can be partitioned as $V = M \cup A \cup H$ where
\begin{enumerate}[(i)]
	\item the induced subgraph $G\vert_M$ is minimal subject to $\lambda_1(G\vert_M)>2$;
	\item the set $A$ consists of all vertices of $G\vert_{A \cup H}$ adjacent to some vertex of $M$;
	\item the induced subgraph $G\vert_H$ is cyclotomic.
\end{enumerate}
\end{proposition}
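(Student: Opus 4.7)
The plan is to construct the partition directly and then verify the three properties, with essentially all of the substance concentrated in (iii). Take $M\subseteq V$ to be any subset that is minimal (by inclusion) subject to $\lambda_1(G|_M) > 2$; such an $M$ exists because the full vertex set $V$ itself has this property by hypothesis. Define $A$ to be the set of vertices in $V\setminus M$ that have at least one neighbour in $M$, and set $H := V\setminus (M\cup A)$. Properties (i) and (ii) now hold by construction, so the only non-trivial task is to show that $G|_H$ is cyclotomic.

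The crucial structural observation, built into the definition of $A$, is that no edge of $G$ joins a vertex of $M$ to a vertex of $H$. Hence $G|_{M\cup H}$ is the disjoint union $G|_M \sqcup G|_H$, and its spectrum is the multiset union of the spectra of the two pieces. Suppose for contradiction that $\lambda_1(G|_H) > 2$. Then both $G|_M$ and $G|_H$ contribute eigenvalues exceeding $2$ to $G|_{M\cup H}$, so $\lambda_2(G|_{M\cup H}) > 2$. But $G|_{M\cup H}$ is an induced subgraph of $G$, so iterated application of Theorem \ref{T:interlacing} (one vertex deletion at a time through the vertices of $A$) yields $\lambda_2(G) \ge \lambda_2(G|_{M\cup H}) > 2$, contradicting $\lambda_2(G) \le 2$. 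Therefore $\lambda_1(G|_H) \le 2$.

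For the matching lower bound $\lambda_n(G|_H) \ge -2$, interlacing again gives that the smallest eigenvalue of an induced subgraph is at least the smallest eigenvalue of the host graph, so it suffices to know $\lambda_n(G) \ge -2$: this holds automatically in the bipartite setting by the $\pm$ symmetry of the spectrum (together with $\lambda_2(G)\le 2$, which forces $\lambda_{n-1}(G)\ge -2$ and leaves $\lambda_n$ at worst $-\lambda_1$, while $G|_H$ being bipartite as well gives the same for its spectrum), and in the non-bipartite setting where the proposition is applied it is part of the Salem-graph standing assumption. Combining the two bounds shows $G|_H$ is cyclotomic, establishing (iii).

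The main obstacle, and the only step that is not a direct unpacking of definitions, is the contradiction in the second paragraph, which is precisely where the hypothesis $\lambda_2(G)\le 2$ is used. It is worth noting that minimality of $M$ plays no role in the proof of (iii); it enters only so that property (i) of the partition is recorded, and one could in principle take $M$ to be any set of vertices whose induced subgraph has $\lambda_1>2$ and still obtain cyclotomic $G|_H$ via the same argument.
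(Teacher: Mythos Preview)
The paper does not prove this proposition; it is quoted from \cite{MS2005} and used as a black box, so there is no in-paper argument to compare against. Your construction of $M$, $A$, $H$ and the interlacing argument showing $\lambda_1(G|_H)\le 2$ are correct and are exactly the standard proof.

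Your third paragraph, however, is unnecessary and slightly misleading. For the adjacency matrix of any graph, Perron--Frobenius gives that $\lambda_1$ equals the spectral radius, so $|\lambda_i|\le\lambda_1$ for every $i$; in particular $\lambda_1(G|_H)\le 2$ already forces $\lambda_{|H|}(G|_H)\ge -2$, and $G|_H$ is cyclotomic with no further hypotheses. There is no need to split into bipartite and non-bipartite cases, and it is not true that the non-bipartite case relies on a ``Salem-graph standing assumption'' external to the proposition: the statement holds exactly as written for any connected $G$ with $\lambda_1>2$ and $\lambda_2\le 2$. Replacing your third paragraph with a one-line appeal to Perron--Frobenius both shortens and strengthens the proof.
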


\section{All bipartite $1$-Salem graphs}\label{S:bipartite}
We now begin our proof of Theorem \ref{T:summary}, starting with part (i); the bipartite $1$-Salem graphs.
\begin{theorem}\label{T:bipartite}
Let $H_1,\dots,H_s$ be a finite set of connected cyclotomic bipartite graphs (so excluding odd cycles from the connected cyclotomic graphs of Figure \ref{F:cyclograph}), and for each $i$ ($1\le i\le s$) let $S_i$ be a non-empty subset of the vertices of $H_i$ such that all the vertices of $S_i$ fall in the same subset of the bipartition of $H_i$.
Form the graph $G$ by taking the union of all the $H_i$ along with a new vertex $v$, and with edges joining $v$ to each vertex in each $S_i$. Then
\begin{enumerate}[(i)]
\item unless $G$ is cyclotomic, it is $1$-Salem;
\item all connected bipartite $1$-Salem graphs arise in this way.
\end{enumerate}
\end{theorem}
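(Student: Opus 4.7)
The proof plan is a routine application of Cauchy interlacing (Theorem \ref{T:interlacing}) together with the two defining features of bipartite Salem graphs: the symmetric spectrum and the existence of a cyclotomic induced subgraph obtained by removing a single vertex.

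For part (i), I would first check that the constructed graph $G$ is connected and bipartite. Connectedness is immediate, since each $S_i$ is non-empty and each $H_i$ is connected, so $v$ is linked to every component. For bipartiteness, I would fix a bipartition $(X_i,Y_i)$ of each $H_i$ with $S_i\subseteq Y_i$; then $(\{v\}\cup\bigcup X_i,\bigcup Y_i)$ is a bipartition of $G$. Next, the induced subgraph $G\setminus\{v\}$ is the disjoint union $H_1\sqcup\cdots\sqcup H_s$ of cyclotomic graphs, so it is cyclotomic and all its eigenvalues lie in $[-2,2]$. By interlacing, $\lambda_2(G)\le\lambda_1(G\setminus\{v\})\le 2$. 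If $G$ itself is not cyclotomic then some eigenvalue lies outside $[-2,2]$, and this eigenvalue must be $\lambda_1(G)>2$; the bipartite spectral symmetry gives $\lambda_n(G)=-\lambda_1(G)<-2$. So $G$ is a bipartite Salem graph, and since deleting $v$ yields a cyclotomic graph, $G$ is $1$-Salem.

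For part (ii), I start from a connected bipartite $1$-Salem graph $G$. By definition there is a vertex $v$ whose removal leaves a cyclotomic induced subgraph. Let $H_1,\dots,H_s$ be the connected components of $G\setminus\{v\}$; these are connected and cyclotomic, and they are bipartite because $G$ is, so in particular no $H_i$ is an odd cycle. Define $S_i$ to be the set of neighbours of $v$ lying in $H_i$. Since $G$ is connected, each $S_i$ must be non-empty, for otherwise $H_i$ would be disconnected from the rest of $G$. Finally, because $G$ is bipartite, all neighbours of $v$ lie on the opposite side of $v$ in the bipartition of $G$; restricting the bipartition of $G$ to $H_i$ gives a bipartition of $H_i$ in which all vertices of $S_i$ sit on the same side. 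This exhibits $G$ in the form produced by the construction.

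The argument is essentially immediate once interlacing is invoked, and I do not anticipate a real obstacle; the only point that deserves care is ruling out the possibility that several eigenvalues of $G$ could exceed $2$, which is precisely what the bound $\lambda_2(G)\le 2$ from interlacing prevents. A secondary point to mention explicitly is that in part (ii) the bipartition of each $H_i$ is forced by (rather than chosen independently of) the bipartition of $G$, which is what ensures the one-sidedness condition on $S_i$ required by the statement.
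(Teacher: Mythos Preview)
Your proof is correct and follows the same approach as the paper: interlacing to bound $\lambda_2(G)\le 2$ for part (i), and decomposing $G\setminus\{v\}$ into its components for part (ii). You have simply filled in details (connectedness, bipartiteness, non-emptiness of $S_i$, the one-sidedness condition) that the paper leaves implicit in its two-sentence proof.
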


\begin{proof}
The second part is clear: if $G$ is a bipartite $1$-Salem graph, then there exists a vertex $v$ whose deletion leaves a cyclotomic induced subgraph, and the components of this give the $H_i$.
The first part is a consequence of interlacing: $G$ has at most one eigenvalue greater than $2$, and being bipartite we are done.
\end{proof}

One can be more explicit and describe precisely which combinations of $H_i$ and $S_i$ result in $G$ being cyclotomic, simply by running through the possibilities for cyclotomic $G$ given in Figure \ref{F:cyclograph}.  Table \ref{tab:bpcycs2} below shows explicitly which cyclotomic graphs can arise for each choice of $H_i$ ($1\le i\le s$). The subsets $S_i$ that give the graphs $G$ are easy to spot. As the degree of $v$ is at least $s$, we see that no cyclotomic graphs arise when $s\geq 5$.

\begin{table}[h]
\[
\begin{array}{|c|l|c||c|l|c|}\hline
s & H_1,\ldots,H_s & G                             & s & H_1,\ldots,H_s & G\\ \hline
\rule{0pt}{12pt} 1 & H_1=E_6 & \tilde{E}_6                          & 2 & H_1=K_1,\, H_2=P_5 & \tilde{E}_6 \\
  & H_1=P_7 & \tilde{E}_7                          &   & H_1=K_1,\, H_2=D_6 & \tilde{E}_7 \\
  & H_1=E_7 & \tilde{E}_7                          &   & H_1=K_2,\, H_2=P_5 & \tilde{E}_7 \\
  & H_1=P_8 & \tilde{E}_8                          &   & H_1=K_1,\, H_2=P_7 & \tilde{E}_8 \\
  & H_1=D_8 & \tilde{E}_8                          &   & H_1=H_2=P_4 & \tilde{E}_8 \\
  & H_1=E_8 & \tilde{E}_8                          &   & H_1=P_3,\, H_2=D_5 & \tilde{E}_8 \\
  & H_1=P_n & \tilde{A}_n                          &   & H_1=P_2,\, H_2=E_6 & \tilde{E}_8 \\
  & H_1=D_n & \tilde{D}_n                          &   & H_1=K_1,\, H_2=E_7 & \tilde{E}_8 \\
  &         &                                      &   & H_1=D_{n_1},\, H_2=D_{n_2} & \tilde{D}_n \\ \hline
\rule{0pt}{12pt} 3 & H_1=H_2=H_3=K_2 & \tilde{E}_6                  & 4 & H_1=H_2=H_3=H_4=K_1 & \tilde{D}_4 \\
  & H_1=K_1,\, H_2=H_3=P_3 & \tilde{E}_7      &&& \\
  & H_1=K_1,\, H_2=K_2,\, H_3=P_5 & \tilde{E}_8 &&& \\
  & H_1=H_2=K_2,\, H_3=D_{n-2} & \tilde{D}_n  &&& \\ \hline
\end{array}
\]
\caption{The cyclotomic graphs that may arise in Theorem \ref{T:bipartite}, arranged by the number of components $s$.  See \cite{CRS2004} for the (standard) definitions of $E_6$, $E_7$, $E_8$, $A_n$, $D_n$; $P_n$ is the path on $n$ vertices.}
\label{tab:bpcycs2}
\end{table}

\section{All $1$-Salem generalised line graphs}\label{S:glgs}
\begin{theorem}\label{T:glg_result}
The $1$-Salem generalized line graphs are precisely the graphs in Figures \ref{F:1salemglg1}, \ref{F:1salemglg2} and \ref{F:1salemglg3}.
\end{theorem}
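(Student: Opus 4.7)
The plan is to apply Proposition~\ref{L:hamlemma} to decompose $V(G) = M \cup A \cup H$, classify first the minimal core $G|_M$, and then enumerate all admissible ways to grow $M$ into a $1$-Salem generalised line graph $G$. Two preliminary reductions save work. First, a $1$-Salem generalised line graph must be non-bipartite, since a bipartite $1$-Salem graph has $\lambda_n = -\lambda_1 < -2$, contradicting the lower bound $-2$ for the spectrum of a generalised line graph. Second, interlacing gives $\lambda_n(G|_M) \ge \lambda_n(G) \ge -2$, so by Cameron \emph{et al.}~\cite{CGSS1976} each candidate core $G|_M$ is either a generalised line graph or one of the finitely many $E_8$ exceptions; the latter account for only a small finite set of cores, to be disposed of directly.

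Step one is to classify the minimal generalised line graphs $M$ with $\lambda_1(M) > 2$. Since every proper induced subgraph of $M$ is cyclotomic, $|V(M)|$ is bounded, and the GCP characterisation of Theorem~\ref{T:glgcond} leaves only a short list of candidates (typically small cliques, small generalised cocktail parties, and small configurations involving a pendant $2$-cycle).

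Step two locates the distinguished vertex $v$ whose deletion from $G$ yields a cyclotomic graph. It must lie in $M$: if $v \notin M$ then $G|_M$ is still an induced subgraph of $G \setminus v$, forcing $\lambda_1(G \setminus v) \ge \lambda_1(G|_M) > 2$ by interlacing. Therefore $G \setminus v$ is a disjoint union of connected cyclotomic graphs from Figure~\ref{F:cyclograph} containing $M \setminus v$ as an induced subgraph, so for each pair $(M, v)$ the possible isomorphism types of $G \setminus v$ form a short finite list.

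The main and hardest step is a reconstruction: for each admissible triple $(M, v, C)$ with $C = G \setminus v$ cyclotomic and $M \setminus v \subseteq C$, enumerate all ways to re-insert the vertex $v$, with its edges inside $M$ already prescribed and new edges to $C \setminus (M \setminus v)$ to be chosen, so that the resulting graph is a generalised line graph. Theorem~\ref{T:glgcond} governs this by forcing a compatible GCP partition: $v$ may join an existing GCP of $C$ only as a vertex of maximal degree, span a new $K_2$ into $C$, or open a new pendant $2$-cycle at a prescribed GCP. The spectral conditions are automatic: $\lambda_1(G) > 2$ follows from $M \subseteq G$, and $\lambda_2(G) \le \lambda_1(C) \le 2$ by Theorem~\ref{T:interlacing}. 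The anticipated obstacle is the combinatorial book-keeping of this final step: the $25$ infinite families arise when $C$ contains a $\tilde{A}_n$ cycle or a $\tilde{D}_n$ component that may grow without bound while remaining compatible with both the GCP partition of $G$ and the chosen attachment of $v$, whereas the $6$ sporadic examples correspond to $C$ containing a rigid $\tilde{E}_6$, $\tilde{E}_7$, or $\tilde{E}_8$ component. Careful case-splitting on how $v$ interacts with each Smith component of $C$ should then recover exactly the families and sporadic graphs listed in Figures~\ref{F:1salemglg1}--\ref{F:1salemglg3}.
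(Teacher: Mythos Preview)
Your skeleton is recognisable---use Proposition~\ref{L:hamlemma}, note that the distinguished vertex $v$ lies in $M$, and then enumerate---but the organisation you propose diverges from the paper's and contains a concrete error.

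The paper does \emph{not} work with $C=G\setminus v$ and attempt to re-insert $v$. Instead it grows outward in layers: first it pins down the minimal cores exactly (Proposition~\ref{P:triangles} shows every Salem generalised line graph contains a $K_3$, so the minimal $M$ are precisely the three $4$-vertex graphs $M_1,M_2,M_3$ of Figure~\ref{F:mingraphs}); then it uses the GCP partition rules of Theorem~\ref{T:glgcond} and Lemma~\ref{L:k5k4} to enumerate all possible $G\vert_{M\cup A}$ (there are $224$, found by hand and checked by computer); finally it shows that $G\vert_H$ can only be a disjoint union of paths with optional snake's-tongue ends, and attaches these. Your $E_8$ detour for $M$ is unnecessary: being a generalised line graph is a hereditary property (there is a finite list of forbidden induced subgraphs), so $G\vert_M$ is automatically a generalised line graph once $G$ is.

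Your reconstruction-from-$C$ approach could in principle be made to work, but your expectations about it are wrong in a way that matters. You claim the six sporadic examples arise when $C$ contains a $\tilde{E}_6$, $\tilde{E}_7$ or $\tilde{E}_8$ component. This cannot happen: $C=G\setminus v$ is an induced subgraph of the generalised line graph $G$, hence is itself a generalised line graph, and none of $\tilde{E}_6,\tilde{E}_7,\tilde{E}_8$ is a generalised line graph (each contains an induced $K_{1,3}$ whose central vertex would need to lie in three GCPs). So every component of $C$ is a subgraph of some $\tilde{A}_n$ or $\tilde{D}_n$, and the sporadic graphs $G_{26}$--$G_{31}$ come instead from rigid configurations of small GCPs (for instance $GCP(5,2)$ and $K_4$ pieces) that admit no path-lengthening. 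This misdiagnosis suggests the case analysis you envisage would not close; the paper's outward-growing approach sidesteps the issue because it tracks the GCP partition of $G$ directly rather than trying to infer it from a partition of $C$.
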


\begin{figure}[h]
\begin{center}
\begin{tabular}{cccc}
\begin{tikzpicture}[scale=0.42, auto] 
\foreach \pos/\name in
{{(0,0)/a},{(0.866,0.5)/b},{(0.866,-0.5)/c},{(-2,0)/aa},{(1.866,2.232)/ba},{(1.866,-2.232)/ca}}
\node[vertex] (\name) at \pos {};
\foreach \pos/\name in
{{(-2.866,0.5)/ab},{(-2.866,-0.5)/ac},{(1.866,3.232)/bb},{(2.732,2.732)/bc},{(2.732,-2.732)/cb},{(1.866,-3.232)/cc}}
\node[vertexg] (\name) at \pos {};
\foreach \edgetype/\source/ \dest in {pedge/a/b,pedge/b/c,pedge/c/a,gedge/aa/ab,gedge/aa/ac,gedge/ba/bb,gedge/ba/bc,gedge/ca/cb,gedge/ca/cc,nedge/a/aa,nedge/b/ba,nedge/c/ca}
\path[\edgetype] (\source) -- (\dest);
\foreach \pos/\name in
{{(0,-3.232)/za},{(0,3.232)/zb}}
\node[] (\name) at \pos {};
\node[] at (-1,0.5) {$a$};
\node[] at (0.95,1.6) {$b$};
\node[] at (0.95,-1.6) {$c$};
\end{tikzpicture}
&
\begin{tikzpicture}[scale=0.42, auto] 
\foreach \pos/\name in
{{(0,0)/a},{(0.866,0.5)/b},{(0.866,-0.5)/c},{(-2,0)/aa}}
\node[vertex] (\name) at \pos {};
\foreach \pos/\name in
{{(-2.866,0.5)/ab},{(-2.866,-0.5)/ac}}
\node[vertexg] (\name) at \pos {};
\foreach \edgetype/\source/ \dest in {pedge/a/b,pedge/b/c,pedge/c/a,gedge/aa/ab,gedge/aa/ac,nedge/a/aa}
\path[\edgetype] (\source) -- (\dest);
\draw[densely dashed] (0.866,-0.5) arc (-160.529:160.529:1.5cm);
\draw (0.866,0.5) arc (160.529:53.51:1.5cm);
\draw (0.866,-0.5) arc (-160.529:-53.51:1.5cm);
\foreach \pos/\name in
{{(0,-3.232)/za},{(0,3.232)/zb}}
\node[] (\name) at \pos {};
\node[] at (-1,0.5) {$a$};
\node[] at (3.2,0) {$b$};
\end{tikzpicture}
&
\begin{tikzpicture}[scale=0.42, auto] 
\foreach \pos/\name in
{{(-0.866,0.5)/a},{(0.866,0.5)/b},{(-0.866,-0.5)/c},{(0.866,-0.5)/d},{(0,0)/e},{(-1.866,2.232)/aa},{(1.866,2.232)/ba},{(-1.866,-2.232)/ca},{(1.866,-2.232)/da}}
\node[vertex] (\name) at \pos {};
\foreach \pos/\name in
{{(-1.866,3.232)/ab},{(-2.732,2.732)/ac},{(1.866,3.232)/bb},{(2.732,2.732)/bc},{(-2.732,-2.732)/cb},{(-1.866,-3.232)/cc},{(2.732,-2.732)/db},{(1.866,-3.232)/dc}}
\node[vertexg] (\name) at \pos {};
\foreach \edgetype/\source/ \dest in {pedge/e/a,pedge/e/b,pedge/e/c,pedge/e/d,pedge/a/c,pedge/b/d,gedge/aa/ab,gedge/aa/ac,gedge/ba/bb,gedge/ba/bc,gedge/ca/cb,gedge/ca/cc,gedge/da/db,gedge/da/dc,nedge/a/aa,nedge/b/ba,nedge/c/ca,nedge/d/da}
\path[\edgetype] (\source) -- (\dest);
\foreach \pos/\name in
{{(0,-3.232)/za},{(0,3.232)/zb}}
\node[] (\name) at \pos {};
\node[] at (-0.95,1.6) {$a$};
\node[] at (0.95,1.6) {$b$};
\node[] at (-0.95,-1.6) {$c$};
\node[] at (0.95,-1.6) {$d$};
\end{tikzpicture}
&
\begin{tikzpicture}[scale=0.42, auto] 
\foreach \pos/\name in
{{(-0.866,0.5)/a},{(0.866,0.5)/b},{(-0.866,-0.5)/c},{(0.866,-0.5)/d},{(0,0)/e},{(-1.866,2.232)/aa},{(1.866,2.232)/ba}}
\node[vertex] (\name) at \pos {};
\foreach \pos/\name in
{{(-1.866,3.232)/ab},{(-2.732,2.732)/ac},{(1.866,3.232)/bb},{(2.732,2.732)/bc}}
\node[vertexg] (\name) at \pos {};
\foreach \edgetype/\source/ \dest in {pedge/e/a,pedge/e/b,pedge/e/c,pedge/e/d,pedge/a/c,pedge/b/d,gedge/aa/ab,gedge/aa/ac,gedge/ba/bb,gedge/ba/bc,nedge/a/aa,nedge/b/ba}
\path[\edgetype] (\source) -- (\dest);
\draw[densely dashed] (0.866,-0.5) arc (54.737:-234.737:1.5cm);
\draw (0.866,-0.5) arc (54.737:-41.754:1.5cm);
\draw (-0.866,-0.5) arc (-234.737:-138.245:1.5cm);
\foreach \pos/\name in
{{(0,-3.232)/za},{(0,3.232)/zb}}
\node[] (\name) at \pos {};
\node[] at (-0.95,1.6) {$a$};
\node[] at (0.95,1.6) {$b$};
\node[] at (0,-2.75) {$c$};
\end{tikzpicture}

\\
$G_1\left(a,b,c\right)$, $a\geq1$ & $G_2\left(a,b\right)$, $b\geq2$ & $G_3\left(a,b,c,d\right)$ & $G_4\left(a,b,c\right), c\ge1$ \\
or $a=\hat{0}$ & & &
\end{tabular}
\end{center}

\begin{center}
\begin{tabular}{cccc}
\begin{tikzpicture}[scale=0.42, auto] 
\foreach \pos/\name in
{{(-0.866,0.5)/a},{(0.866,0.5)/b},{(-0.866,-0.5)/c},{(0.866,-0.5)/d},{(0,0)/e}}
\node[vertex] (\name) at \pos {};
\foreach \edgetype/\source/ \dest in {pedge/e/a,pedge/e/b,pedge/e/c,pedge/e/d,pedge/a/c,pedge/b/d}
\path[\edgetype] (\source) -- (\dest);
\draw[densely dashed] (0.866,-0.5) arc (54.737:-234.737:1.5cm);
\draw (0.866,-0.5) arc (54.737:-41.754:1.5cm);
\draw (-0.866,-0.5) arc (-234.737:-138.245:1.5cm);
\draw[densely dashed] (0.866,0.5) arc (-54.737:234.737:1.5cm);
\draw (0.866,0.5) arc (-54.737:41.754:1.5cm);
\draw (-0.866,0.5) arc (-125.263:-221.75433:1.5cm);
\foreach \pos/\name in
{{(0,-3.232)/za},{(0,3.232)/zb}}
\node[] (\name) at \pos {};
\node[] at (0,2.75) {$a$};
\node[] at (0,-2.75) {$b$};
\end{tikzpicture}
&
\begin{tikzpicture}[scale=0.42, auto] 
\foreach \pos/\name in
{{(-0.866,0.5)/a},{(0.866,0.5)/b},{(-0.866,-0.5)/c},{(0.866,-0.5)/d},{(0,0)/e},{(-1.866,2.232)/aa},{(-1.866,-2.232)/ca}}
\node[vertex] (\name) at \pos {};
\foreach \pos/\name in
{{(-1.866,3.232)/ab},{(-2.732,2.732)/ac},{(-2.732,-2.732)/cb},{(-1.866,-3.232)/cc}}
\node[vertexg] (\name) at \pos {};
\foreach \edgetype/\source/ \dest in {pedge/e/a,pedge/e/b,pedge/e/c,pedge/e/d,pedge/a/c,pedge/b/d,gedge/aa/ab,gedge/aa/ac,gedge/ca/cb,gedge/ca/cc,nedge/a/aa,nedge/c/ca}
\path[\edgetype] (\source) -- (\dest);
\draw[densely dashed] (0.866,-0.5) arc (-160.529:160.529:1.5cm);
\draw (0.866,0.5) arc (160.529:53.51:1.5cm);
\draw (0.866,-0.5) arc (-160.529:-53.51:1.5cm);
\foreach \pos/\name in
{{(0,-3.232)/za},{(0,3.232)/zb}}
\node[] (\name) at \pos {};
\node[] at (-0.95,1.6) {$a$};
\node[] at (3.2,0) {$b$};
\node[] at (-0.95,-1.6) {$c$};
\end{tikzpicture}
&
\begin{tikzpicture}[scale=0.42, auto] 
\foreach \pos/\name in
{{(-0.866,0.5)/a},{(0.866,0.5)/b},{(-0.866,-0.5)/c},{(0.866,-0.5)/d},{(0,0)/e}}
\node[vertex] (\name) at \pos {};
\foreach \edgetype/\source/ \dest in {pedge/e/a,pedge/e/b,pedge/e/c,pedge/e/d,pedge/a/c,pedge/b/d}
\path[\edgetype] (\source) -- (\dest);
\draw[densely dashed] (0.866,-0.5) arc (-160.529:160.529:1.5cm);
\draw (0.866,0.5) arc (160.529:53.51:1.5cm);
\draw (0.866,-0.5) arc (-160.529:-53.51:1.5cm);
\draw[densely dashed] (-0.866,0.5) arc (19.471:340.529:1.5cm);
\draw (-0.866,0.5) arc (19.471:126.49:1.5cm);
\draw (-0.866,-0.5) arc (-19.471:-126.49:1.5cm);
\foreach \pos/\name in
{{(0,-3.8)/za},{(0,1.866)/zb}}
\node[] (\name) at \pos {};
\node[] at (-3.2,0) {$a$};
\node[] at (3.2,0) {$b$};
\end{tikzpicture}
&
\begin{tikzpicture}[scale=0.42, auto] 
\foreach \pos/\name in
{{(0,0)/a},{(0,1)/b},{(-0.5,1.866)/c},{(0.5,1.866)/d},{(-0.5,-0.866)/e},{(0.5,-0.866)/f},{(-2.232,-1.866)/g},{(2.232,-1.866)/h}}
\node[vertex] (\name) at \pos {};
\foreach \pos/\name in
{{(-3.232,-1.866)/ga},{(-2.732,-2.732)/gb},{(3.232,-1.866)/ha},{(2.732,-2.732)/hb}}
\node[vertexg] (\name) at \pos {};
\foreach \edgetype/\source/ \dest in {pedge/a/b,pedge/b/c,pedge/b/d,pedge/d/c,pedge/a/e,pedge/e/f,pedge/f/a,nedge/e/g,nedge/f/h,gedge/g/ga,gedge/g/gb,gedge/h/ha,gedge/h/hb}
\path[\edgetype] (\source) -- (\dest);
\foreach \pos/\name in
{{(0,-3.8)/za},{(0,1.866)/zb}}
\node[] (\name) at \pos {};
\node[] at (-1.6,-0.8) {$a$};
\node[] at (1.6,-0.8) {$b$};
\end{tikzpicture}
\\
 $G_5\left(a,b\right),\ a,b\ge1$ & $G_6\left(a,b,c\right)$, $b\geq2$ & $G_7\left(a,b\right)$, $a,b\geq2$  & $G_8\left(a,b\right)$
\end{tabular}
\end{center}

\begin{center}
\begin{tabular}{cccc}
\begin{tikzpicture}[scale=0.42, auto] 
\foreach \pos/\name in
{{(0,0)/a},{(0,1)/b},{(-0.5,1.866)/c},{(0.5,1.866)/d},{(-0.5,-0.866)/e},{(0.5,-0.866)/f}}
\node[vertex] (\name) at \pos {};
\foreach \edgetype/\source/ \dest in {pedge/a/b,pedge/b/c,pedge/b/d,pedge/d/c,pedge/a/e,pedge/e/f,pedge/f/a}
\path[\edgetype] (\source) -- (\dest);
\draw[densely dashed] (0.5,-0.866) arc (70.529:-250.529:1.5cm);
\draw (0.5,-0.866) arc (70.529:-41.754:1.5cm);
\draw (-0.5,-0.866) arc (-250.259:-138.245:1.5cm);
\foreach \pos/\name in
{{(0,-4.098)/za},{(0,2.366)/zb}}
\node[] (\name) at \pos {};
\node[] at (0,-3.2) {$a$};
\end{tikzpicture}
&
\begin{tikzpicture}[scale=0.42, auto] 
\foreach \pos/\name in
{{(0,0)/a},{(1.414,0)/b},{(0.707,-0.707)/c},{(0.707,0.707)/d},{(-2,0)/e},{(3.414,0)/h}}
\node[vertex] (\name) at \pos {};
\foreach \pos/\name in
{{(4.28,0.5)/ha},{(4.28,-0.5)/hb},{(-2.866,0.5)/ea},{(-2.866,-0.5)/eb}}
\node[vertexg] (\name) at \pos {};
\foreach \edgetype/\source/ \dest in {pedge/a/b,pedge/a/c,nedge/a/e,pedge/a/d,pedge/b/c,pedge/b/d,gedge/e/ea,gedge/eb/e,nedge/b/h,gedge/h/ha,gedge/h/hb}
\path[\edgetype] (\source) -- (\dest);
\foreach \pos/\name in
{{(0,-3.232)/za},{(0,3.232)/zb}}
\node[] (\name) at \pos {};
\node[] at (-1,0.5) {$a$};
\node[] at (2.414,0.5) {$b$};
\end{tikzpicture}
&
\begin{tikzpicture}[scale=0.42, auto] 
\foreach \pos/\name in
{{(0,0)/a},{(1.414,0)/b},{(0.707,-0.707)/c},{(0.707,0.707)/d}}
\node[vertex] (\name) at \pos {};
\foreach \edgetype/\source/ \dest in {pedge/a/b,pedge/a/c,pedge/a/d,pedge/b/c,pedge/b/d}
\path[\edgetype] (\source) -- (\dest);
\draw[densely dashed] (1.414,0) arc (61.879:-241.88:1.5cm);
\draw (1.414,0) arc (61.879:-41.754:1.5cm);
\draw (0,0) arc (-241.88:-138.245:1.5cm);
\foreach \pos/\name in
{{(0,-3.232)/za},{(0,3.232)/zb}}
\node[] (\name) at \pos {};
\node[] at (0.707,-2.4) {$a$};
\end{tikzpicture}
&
\begin{tikzpicture}[scale=0.42, auto] 
\foreach \pos/\name in
{{(0,0)/a},{(0.866,0.5)/b},{(0.866,-0.5)/c},{(-0.707,0.707)/d},{(-0.707,-0.707)/e},{(-1.414,0)/f},{(-3.414,0)/g},{(1.866,2.232)/ba},{(1.866,-2.232)/ca}}
\node[vertex] (\name) at \pos {};
\foreach \pos/\name in
{{(-4.28,0.5)/gb},{(-4.28,-0.5)/gc},{(1.866,3.232)/bb},{(2.732,2.732)/bc},{(2.732,-2.732)/cb},{(1.866,-3.232)/cc}}
\node[vertexg] (\name) at \pos {};
\foreach \edgetype/\source/ \dest in {pedge/a/b,pedge/b/c,pedge/c/a,gedge/ba/bb,gedge/ba/bc,gedge/ca/cb,gedge/ca/cc,nedge/b/ba,nedge/c/ca,pedge/a/d,pedge/a/e,pedge/a/f,pedge/f/d,pedge/f/e,nedge/f/g,gedge/g/gb,gedge/g/gc}
\path[\edgetype] (\source) -- (\dest);
\foreach \pos/\name in
{{(0,-3.232)/za},{(0,3.232)/zb}}
\node[] (\name) at \pos {};
\node[] at (-2.414,0.5) {$a$};
\node[] at (0.95,1.6) {$b$};
\node[] at (0.95,-1.6) {$c$};
\end{tikzpicture}
\\
$G_9\left(a\right)$, $a\geq2$ & $G_{10}\left(a,b\right)$ & $G_{11}\left(a\right)$, $a\geq2$ & $G_{12}\left(a,b,c\right)$
\end{tabular}
\end{center}
\caption{Twelve of the $25$ infinite families of $1$-Salem generalised line graphs.  The parameters $a,b,c,d$ are $\ge0$ unless specified.}\label{F:1salemglg1}
\end{figure}
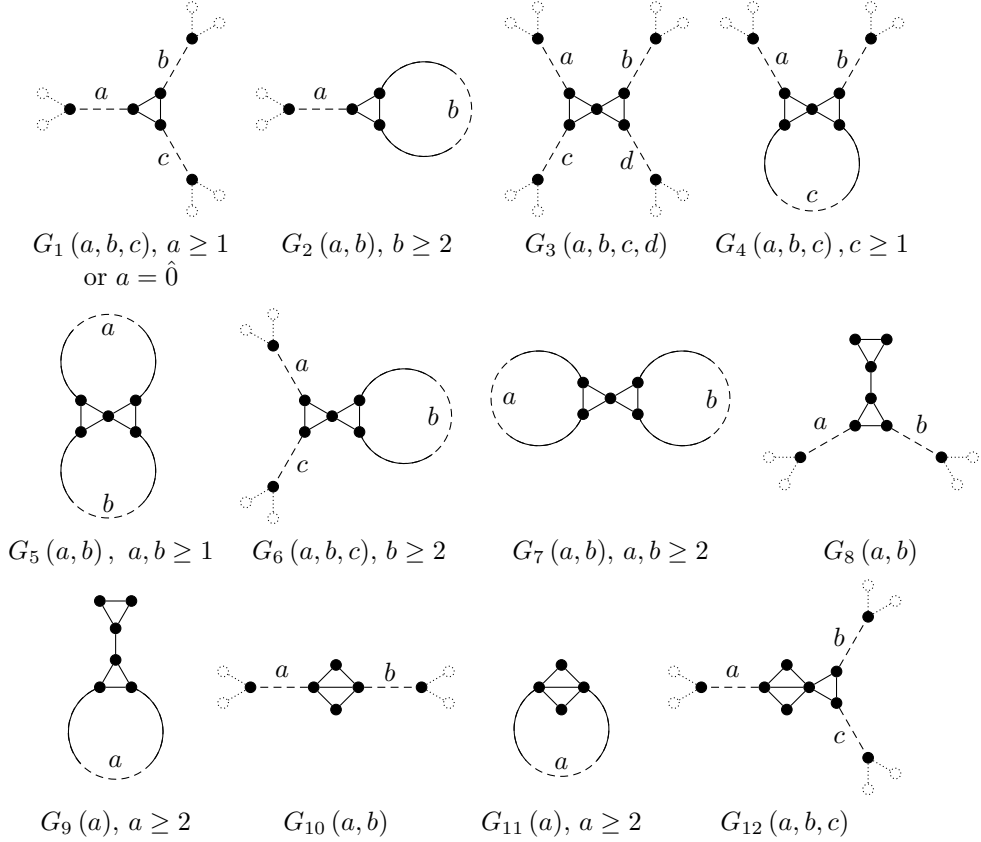

\begin{figure}[h]
\begin{center}
\begin{tabular}{ccc}
\begin{tikzpicture}[scale=0.42, auto] 
\foreach \pos/\name in
{{(0,0)/a},{(0.866,0.5)/b},{(0.866,-0.5)/c},{(-0.707,0.707)/d},{(-0.707,-0.707)/e},{(-1.414,0)/f},{(-3.414,0)/g}}
\node[vertex] (\name) at \pos {};
\foreach \pos/\name in
{{(-4.28,0.5)/ab},{(-4.28,-0.5)/ac}}
\node[vertexg] (\name) at \pos {};
\foreach \edgetype/\source/ \dest in {pedge/a/b,pedge/b/c,pedge/c/a,pedge/a/d,pedge/a/e,pedge/a/f,pedge/f/d,pedge/f/e,nedge/f/g,gedge/g/gb,gedge/g/gc}
\path[\edgetype] (\source) -- (\dest);
\draw[densely dashed] (0.866,-0.5) arc (-160.529:160.529:1.5cm);
\draw (0.866,0.5) arc (160.529:53.51:1.5cm);
\draw (0.866,-0.5) arc (-160.529:-53.51:1.5cm);
\foreach \pos/\name in
{{(0,-2.75)/za},{(0,3.232)/zb}}
\node[] (\name) at \pos {};
\node[] at (-2.414,0.5) {$a$};
\node[] at (3.2,0) {$b$};
\end{tikzpicture}
&
\begin{tikzpicture}[scale=0.42, auto] 
\foreach \pos/\name in
{{(0,0)/a},{(0.866,0.5)/b},{(0.866,-0.5)/c},{(-0.707,0.707)/d},{(-0.707,-0.707)/e},{(-1.414,0)/f},{(1.866,2.232)/ba}}
\node[vertex] (\name) at \pos {};
\foreach \pos/\name in
{{(1.866,3.232)/bb},{(2.732,2.732)/bc}}
\node[vertexg] (\name) at \pos {};
\foreach \edgetype/\source/ \dest in {pedge/a/b,pedge/b/c,pedge/c/a,gedge/ba/bb,gedge/ba/bc,nedge/b/ba,pedge/a/d,pedge/a/e,pedge/a/f,pedge/f/d,pedge/f/e}
\path[\edgetype] (\source) -- (\dest);
\draw[densely dashed] (0.866,-0.5) arc (26.552:-231.292:1.5cm);
\draw (0.866,-0.5) arc (26.552:-59.396:1.5cm);
\draw (-1.414,0) arc (-231.292:-145.346:1.5cm);
\foreach \pos/\name in
{{(0,-2.75)/za},{(0,3.232)/zb}}
\node[] (\name) at \pos {};
\node[] at (-0.6,-2.1) {$a$};
\node[] at (0.95,1.6) {$b$};
\end{tikzpicture}
&
\begin{tikzpicture}[scale=0.42, auto] 
\foreach \pos/\name in
{{(0,0)/a},{(1.414,0)/b},{(0.707,-0.707)/c},{(0.707,0.707)/d},{(-1,0)/e},{(-1.866,0.5)/f},{(-1.866,-0.5)/g},{(3.414,0)/h}}
\node[vertex] (\name) at \pos {};
\foreach \pos/\name in
{{(4.28,0.5)/ha},{(4.28,-0.5)/hb}}
\node[vertexg] (\name) at \pos {};
\foreach \edgetype/\source/ \dest in {pedge/a/b,pedge/a/c,pedge/a/e,pedge/a/d,pedge/b/c,pedge/b/d,pedge/e/g,pedge/f/e,pedge/g/f,nedge/b/h,gedge/h/ha,gedge/h/hb}
\path[\edgetype] (\source) -- (\dest);
\foreach \pos/\name in
{{(0,-2.75)/za},{(0,3.232)/zb}}
\node[] (\name) at \pos {};
\node[] at (2.5,0.5) {$a$};
\end{tikzpicture}

\\
$G_{13}\left(a,b\right)$, $b\geq2$ & $G_{14}\left(a,b\right),\ a\ge1$ & $G_{15}\left(a\right)$
\end{tabular}
\end{center}

\begin{center}
\begin{tabular}{cccc}
\begin{tikzpicture}[scale=0.42, auto] 
\foreach \pos/\name in
{{(0,0)/a},{(1.414,0)/b},{(0.707,-0.707)/c},{(0.707,0.707)/d},{(-0.707,-0.707)/e},{(-0.707,0.707)/f},{(-1.414,0)/g},{(3.414,0)/h},{(-3.414,0)/i}}
\node[vertex] (\name) at \pos {};
\foreach \pos/\name in
{{(4.28,0.5)/ha},{(4.28,-0.5)/hb},{(-4.28,0.5)/ia},{(-4.28,-0.5)/ib}}
\node[vertexg] (\name) at \pos {};
\foreach \edgetype/\source/ \dest in {pedge/a/b,pedge/a/c,pedge/a/e,pedge/a/d,pedge/b/c,pedge/b/d,pedge/f/a,pedge/g/a,pedge/g/f,pedge/g/e,nedge/b/h,gedge/h/ha,gedge/h/hb,nedge/g/i,gedge/i/ib,gedge/i/ia}
\path[\edgetype] (\source) -- (\dest);
\foreach \pos/\name in
{{(0,-3.232)/za},{(0,3.8)/zb}}
\node[] (\name) at \pos {};
\node[] at (-2.414,0.5) {$a$};
\node[] at (2.414,0.5) {$b$};
\end{tikzpicture}
&
\begin{tikzpicture}[scale=0.42, auto] 
\foreach \pos/\name in
{{(0,0)/a},{(1.414,0)/b},{(0.707,-0.707)/c},{(0.707,0.707)/d},{(-0.707,-0.707)/e},{(-0.707,0.707)/f},{(-1.414,0)/g}}
\node[vertex] (\name) at \pos {};
\foreach \edgetype/\source/ \dest in {pedge/a/b,pedge/a/c,pedge/a/e,pedge/a/d,pedge/b/c,pedge/b/d,pedge/f/a,pedge/g/a,pedge/g/f,pedge/g/e}
\path[\edgetype] (\source) -- (\dest);
\draw[densely dashed] (1.414,0) arc (36.1:-216.1:1.75cm);
\draw (1.414,0) arc (36.1:-47.966:1.75cm);
\draw (-1.414,0) arc (-216.1:-132.034:1.75cm);
\foreach \pos/\name in
{{(0,-3.232)/za},{(0,3.232)/zb}}
\node[] (\name) at \pos {};
\node[] at (0,-2.2) {$a$};
\end{tikzpicture}
&
\begin{tikzpicture}[scale=0.42, auto] 
\foreach \pos/\name in
{{(0,0)/a},{(-1.414,0)/b},{(-0.707,0.707)/c},{(-0.707,-0.707)/d},{(2,0)/e}}
\node[vertex] (\name) at \pos {};
\foreach \pos/\name in
{{(2.866,0.5)/eb},{(2.866,-0.5)/ec}}
\node[vertexg] (\name) at \pos {};
\foreach \edgetype/\source/ \dest in {pedge/a/b,pedge/a/c,pedge/a/d,pedge/b/c,pedge/b/d,pedge/c/d,nedge/a/e,gedge/e/eb,gedge/e/ec}
\path[\edgetype] (\source) -- (\dest);
\foreach \pos/\name in
{{(0,-3.232)/za},{(0,3.232)/zb}}
\node[] (\name) at \pos {};
\node[] at (1,0.5) {$a$};
\end{tikzpicture}
&
\begin{tikzpicture}[scale=0.42, auto] 
\foreach \pos/\name in
{{(0,0)/a},{(0.866,0.5)/b},{(0.866,-0.5)/c},{(-1.414,0)/d},{(-0.707,0.707)/e},{(-0.707,-0.707)/f},{(1.866,2.232)/ba},{(1.866,-2.232)/ca}}
\node[vertex] (\name) at \pos {};
\foreach \pos/\name in
{{(1.866,3.232)/bb},{(2.732,2.732)/bc},{(2.732,-2.732)/cb},{(1.866,-3.232)/cc}}
\node[vertexg] (\name) at \pos {};
\foreach \edgetype/\source/ \dest in {pedge/a/b,pedge/b/c,pedge/c/a,pedge/a/f,pedge/a/d,pedge/a/e,pedge/f/g,pedge/d/e,pedge/f/e,gedge/ba/bb,gedge/ba/bc,gedge/ca/cb,gedge/ca/cc,nedge/b/ba,nedge/c/ca}
\path[\edgetype] (\source) -- (\dest);
\foreach \pos/\name in
{{(0,-3.232)/za},{(0,3.232)/zb}}
\node[] (\name) at \pos {};
\node[] at (0.95,1.6) {$a$};
\node[] at (0.95,-1.6) {$b$};
\end{tikzpicture}
\\
$G_{16}\left(a,b\right)$ & $G_{17}\left(a\right),\ a\ge1$ & $G_{18}\left(a\right)$ & $G_{19}\left(a,b\right)$
\end{tabular}
\end{center}

\begin{center}
\begin{tabular}{cccc}

\begin{tikzpicture}[scale=0.42, auto] 
\foreach \pos/\name in
{{(0,0)/a},{(0.866,0.5)/b},{(0.866,-0.5)/c},{(-1.414,0)/d},{(-0.707,0.707)/e},{(-0.707,-0.707)/f}}
\node[vertex] (\name) at \pos {};
\foreach \edgetype/\source/ \dest in {pedge/a/b,pedge/b/c,pedge/c/a,pedge/a/f,pedge/a/d,pedge/a/e,pedge/f/g,pedge/d/e,pedge/f/e}
\path[\edgetype] (\source) -- (\dest);
\draw[densely dashed] (0.866,-0.5) arc (-160.529:160.529:1.5cm);
\draw (0.866,0.5) arc (160.529:53.51:1.5cm);
\draw (0.866,-0.5) arc (-160.529:-53.51:1.5cm);
\foreach \pos/\name in
{{(0,-3.232)/za},{(0,3.232)/zb}}
\node[] (\name) at \pos {};
\node[] at (3.2,0) {$a$};
\end{tikzpicture}
&
\begin{tikzpicture}[scale=0.42, auto] 
\foreach \pos/\name in
{{(0,0)/a},{(1.414,0)/b},{(0.707,-0.707)/c},{(0.707,0.707)/d},{(-0.707,-0.707)/e},{(-0.707,0.707)/f},{(-1.414,0)/g},{(3.414,0)/h}}
\node[vertex] (\name) at \pos {};
\foreach \pos/\name in
{{(4.28,0.5)/ha},{(4.28,-0.5)/hb}}
\node[vertexg] (\name) at \pos {};
\foreach \edgetype/\source/ \dest in {pedge/a/b,pedge/a/c,pedge/a/e,pedge/a/d,pedge/b/c,pedge/b/d,pedge/f/a,pedge/g/a,pedge/g/f,pedge/g/e,pedge/f/e,nedge/b/h,gedge/h/ha,gedge/h/hb}
\path[\edgetype] (\source) -- (\dest);
\foreach \pos/\name in
{{(0,-3.232)/za},{(0,3.232)/zb}}
\node[] (\name) at \pos {};
\node[] at (2.5,0.5) {$a$};
\end{tikzpicture}
&
\begin{tikzpicture}[scale=0.42, auto] 
\foreach \pos/\name in
{{(0,0)/a},{(-0.588,0.809)/b},{(-1.54,0.5)/c},{(-1.54,-0.5)/d},{(-0.588,-0.809)/e},{(2,0)/f}}
\node[vertex] (\name) at \pos {};
\foreach \pos/\name in
{{(2.866,0.5)/fa},{(2.866,-0.5)/fb}}
\node[vertexg] (\name) at \pos {};
\foreach \edgetype/\source/ \dest in {pedge/a/b,pedge/a/c,pedge/a/d,pedge/a/e,pedge/b/e,pedge/b/c,pedge/c/d,pedge/d/e,nedge/a/f,gedge/f/fa,gedge/f/fb}
\path[\edgetype] (\source) -- (\dest);
\foreach \pos/\name in
{{(0,-3.232)/za},{(0,3.232)/zb}}
\node[] (\name) at \pos {};
\node[] at (1,0.5) {$a$};
\end{tikzpicture}
&
\begin{tikzpicture}[scale=0.42, auto] 
\foreach \pos/\name in
{{(0,0)/a},{(-0.588,0.809)/b},{(-1.54,0.5)/c},{(-1.54,-0.5)/d},{(-0.588,-0.809)/e},{(.866,0.5)/f},{(.866,-0.5)/g},{(1.866,2.232)/fa},{(1.866,-2.232)/ga}}
\node[vertex] (\name) at \pos {};
\foreach \pos/\name in
{{(1.866,3.232)/fb},{(2.732,2.732)/fc},{(2.732,-2.732)/gb},{(1.866,-3.232)/gc}}
\node[vertexg] (\name) at \pos {};
\foreach \edgetype/\source/ \dest in {pedge/a/b,pedge/a/c,pedge/a/d,pedge/a/e,pedge/b/e,pedge/b/c,pedge/c/d,pedge/d/e,pedge/a/f,pedge/a/g,pedge/g/f,nedge/f/fa,nedge/g/ga,gedge/fa/fb,gedge/fa/fc,gedge/ga/gb,gedge/ga/gc}
\path[\edgetype] (\source) -- (\dest);
\foreach \pos/\name in
{{(0,-3.232)/za},{(0,3.232)/zb}}
\node[] (\name) at \pos {};
\node[] at (0.95,1.6) {$a$};
\node[] at (0.95,-1.6) {$b$};
\end{tikzpicture}
\\
$G_{20}\left(a\right)$, $a\geq2$ & $G_{21}\left(a\right)$ & $G_{22}\left(a\right)$ & $G_{23}\left(a,b\right)$
\end{tabular}
\end{center}


\begin{center}
\begin{tabular}{cc}
\begin{tikzpicture}[scale=0.42, auto] 
\foreach \pos/\name in
{{(0,0)/a},{(-0.588,0.809)/b},{(-1.54,0.5)/c},{(-1.54,-0.5)/d},{(-0.588,-0.809)/e},{(.866,0.5)/f},{(.866,-0.5)/g}}
\node[vertex] (\name) at \pos {};
\foreach \edgetype/\source/ \dest in {pedge/a/b,pedge/a/c,pedge/a/d,pedge/a/e,pedge/b/e,pedge/b/c,pedge/c/d,pedge/d/e,pedge/a/f,pedge/a/g,pedge/g/f}
\path[\edgetype] (\source) -- (\dest);
\draw[densely dashed] (0.866,-0.5) arc (-160.529:160.529:1.5cm);
\draw (0.866,0.5) arc (160.529:53.51:1.5cm);
\draw (0.866,-0.5) arc (-160.529:-53.51:1.5cm);
\foreach \pos/\name in
{{(0,-1.5)/za},{(0,1.5)/zb}}
\node[] (\name) at \pos {};
\node[] at (3.2,0) {$a$};
\end{tikzpicture}
&
\begin{tikzpicture}[scale=0.42, auto] 
\foreach \pos/\name in
{{(0,0)/a},{(-0.588,0.809)/b},{(-1.54,0.5)/c},{(-1.54,-0.5)/d},{(-0.588,-0.809)/e},{(0.707,0.707)/f},{(0.707,-0.707)/g},{(1.414,0)/h},{(3.414,0)/ha}}
\node[vertex] (\name) at \pos {};
\foreach \pos/\name in
{{(4.28,0.5)/hb},{(4.28,-0.5)/hc}}
\node[vertexg] (\name) at \pos {};
\foreach \edgetype/\source/ \dest in {pedge/a/b,pedge/a/c,pedge/a/d,pedge/a/e,pedge/b/e,pedge/b/c,pedge/c/d,pedge/d/e,pedge/a/f,pedge/a/g,pedge/g/h,pedge/f/h,pedge/a/h,nedge/h/ha,gedge/ha/hb,gedge/ha/hc}
\path[\edgetype] (\source) -- (\dest);
\foreach \pos/\name in
{{(0,-1.5)/za},{(0,1.5)/zb}}
\node[] (\name) at \pos {};
\node[] at (2.5,0.5) {$a$};
\end{tikzpicture}
\\
$G_{24}\left(a\right)$, $a\geq2$ & $G_{25}\left(a\right)$
\end{tabular}
\end{center}
\caption{Thirteen of the $25$ infinite families of $1$-Salem generalised line graphs.  The parameters $a,b$ are $\ge0$ unless specified.}\label{F:1salemglg2}
\end{figure}
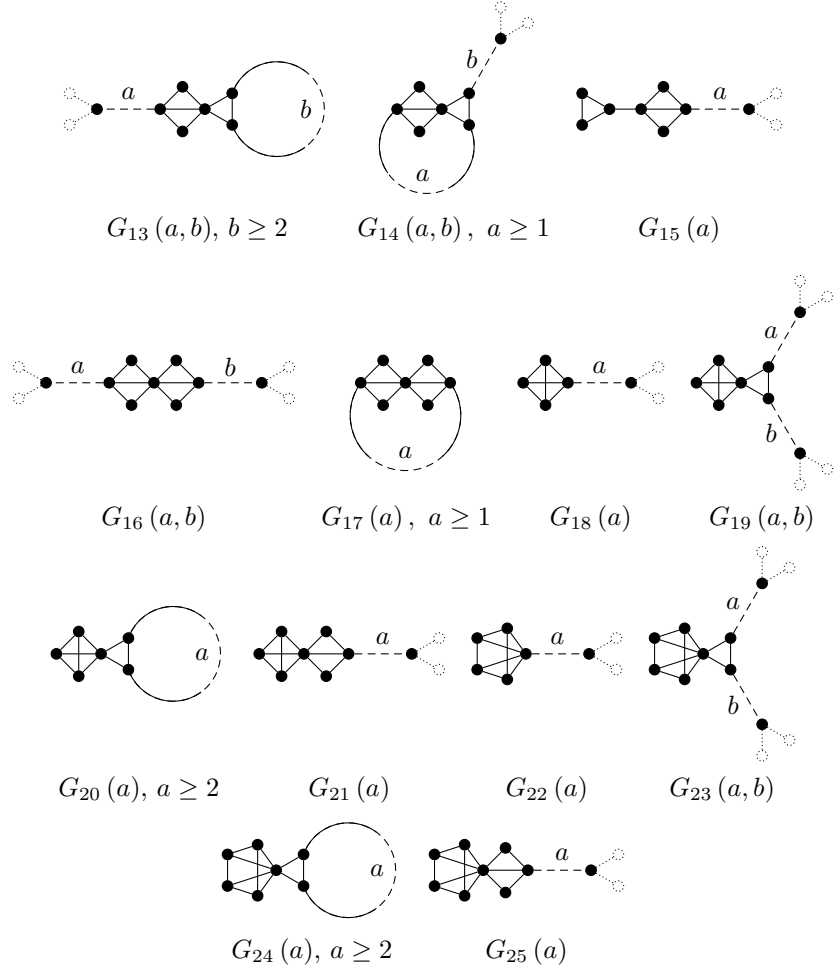

\begin{figure}
\begin{center}
\begin{tabular}{cccccc}
\begin{tikzpicture}[scale=0.42, auto] 
\foreach \pos/\name in
{{(0,0)/a},{(1,0)/b},{(-1,0)/c},{(1.866,0.5)/d},{(-1.866,0.5)/e},{(-1.866,-0.5)/f},{(1.866,-0.5)/g}}
\node[vertex] (\name) at \pos {};
\foreach \edgetype/\source/ \dest in {pedge/a/b,pedge/a/c,pedge/c/e,pedge/c/f,pedge/b/g,pedge/b/d,pedge/d/g,pedge/f/e}
\path[\edgetype] (\source) -- (\dest);
\foreach \pos/\name in
{{(0,-1)/za},{(0,1)/zb}}
\node[] (\name) at \pos {};
\end{tikzpicture}
&
\begin{tikzpicture}[scale=0.42, auto] 
\foreach \pos/\name in
{{(0,0)/a},{(1,0)/b},{(-1.414,0)/c},{(1.866,0.5)/d},{(-0.707,0.707)/e},{(-0.707,-0.707)/f},{(1.866,-0.5)/g}}
\node[vertex] (\name) at \pos {};
\foreach \edgetype/\source/ \dest in {pedge/a/b,pedge/a/c,pedge/a/e,pedge/a/f,pedge/b/g,pedge/b/d,pedge/d/g,pedge/c/e,pedge/c/f,pedge/e/f}
\path[\edgetype] (\source) -- (\dest);
\foreach \pos/\name in
{{(0,-1)/za},{(0,1)/zb}}
\node[] (\name) at \pos {};
\end{tikzpicture}
&
\begin{tikzpicture}[scale=0.42, auto] 
\foreach \pos/\name in
{{(0,0)/a},{(1.414,0)/b},{(-1.414,0)/c},{(0.707,0.707)/d},{(-0.707,0.707)/e},{(-0.707,-0.707)/f},{(0.707,-0.707)/g}}
\node[vertex] (\name) at \pos {};
\foreach \edgetype/\source/ \dest in {pedge/a/b,pedge/a/c,pedge/a/d,pedge/a/e,pedge/a/f,pedge/a/g,pedge/b/g,pedge/b/d,pedge/d/g,pedge/c/e,pedge/c/f,pedge/e/f}
\path[\edgetype] (\source) -- (\dest);
\foreach \pos/\name in
{{(0,-1)/za},{(0,1)/zb}}
\node[] (\name) at \pos {};
\end{tikzpicture}
&
\begin{tikzpicture}[scale=0.42, auto] 
\foreach \pos/\name in
{{(0,0)/a},{(-0.588,0.809)/b},{(-1.54,0.5)/c},{(-1.54,-0.5)/d},{(-0.588,-0.809)/e},{(1,0)/f},{(1.866,0.5)/g},{(1.866,-0.5)/h}}
\node[vertex] (\name) at \pos {};
\foreach \edgetype/\source/ \dest in {pedge/a/b,pedge/a/c,pedge/a/d,pedge/a/e,pedge/b/e,pedge/b/c,pedge/c/d,pedge/d/e,pedge/a/f,pedge/g/f,pedge/h/f,pedge/h/g}
\path[\edgetype] (\source) -- (\dest);
\foreach \pos/\name in
{{(0,-1)/za},{(0,1)/zb}}
\node[] (\name) at \pos {};
\end{tikzpicture}
&
\begin{tikzpicture}[scale=0.42, auto] 
\foreach \pos/\name in
{{(0,0)/a},{(-0.588,0.809)/b},{(-1.54,0.5)/c},{(-1.54,-0.5)/d},{(-0.588,-0.809)/e},{(0.707,0.707)/f},{(0.707,-0.707)/g},{(1.414,0)/h}}
\node[vertex] (\name) at \pos {};
\foreach \edgetype/\source/ \dest in {pedge/a/b,pedge/a/c,pedge/a/d,pedge/a/e,pedge/b/e,pedge/b/c,pedge/c/d,pedge/d/e,pedge/a/f,pedge/a/g,pedge/g/h,pedge/f/h,pedge/a/h,pedge/f/g}
\path[\edgetype] (\source) -- (\dest);
\foreach \pos/\name in
{{(0,-1)/za},{(0,1)/zb}}
\node[] (\name) at \pos {};
\end{tikzpicture}
&
\begin{tikzpicture}[scale=0.42, auto] 
\foreach \pos/\name in
{{(0,0)/a},{(-0.588,0.809)/b},{(-1.54,0.5)/c},{(-1.54,-0.5)/d},{(-0.588,-0.809)/e},{(0.588,0.809)/f},{(1.54,0.5)/g},{(1.54,-0.5)/h},{(0.588,-0.809)/i}}
\node[vertex] (\name) at \pos {};
\foreach \edgetype/\source/ \dest in {pedge/a/b,pedge/a/c,pedge/a/d,pedge/a/e,pedge/b/e,pedge/b/c,pedge/c/d,pedge/d/e,pedge/a/f,pedge/f/g,pedge/g/h,pedge/h/i,pedge/g/a,pedge/h/a,pedge/i/a,pedge/i/f}
\path[\edgetype] (\source) -- (\dest);
\foreach \pos/\name in
{{(0,-1)/za},{(0,1)/zb}}
\node[] (\name) at \pos {};
\end{tikzpicture}\\
$G_{26}$ & $G_{27}$ & $G_{28}$ & $G_{29}$ & $G_{30}$ & $G_{31}$
\end{tabular}
\end{center}
\caption{The six sporadic $1$-Salem generalised line graphs.}
\label{F:1salemglg3}
\end{figure}
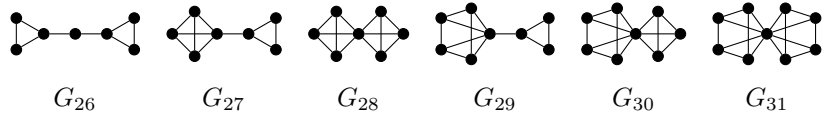

A quick notational point: in Figures \ref{F:1salemglg1}--\ref{F:1salemglg3}, a dashed edge indicates a path between the endpoints of the dashed edge, having an arbitrary number of edges (perhaps even none, or perhaps with a lower bound shown); the parameter attached to a dashed edge gives the number of edges on this path. Dotted edges and vertices are used to indicate edges and vertices that may or may not be there. Here the dotted edges and vertices always form a `snake's tongue' shape, and later on we will use the notation $\hat{a}$ to indicate a path of length $a$ (i.e., having $a$ edges) with two extra vertices in the shape of a snake's tongue on the loose end. Thus, for example, $G_{10}(1,1)$ and $G_{10}(1,\hat{1})$ are shown in Figure \ref{F:hatexample}.  The $1$-Salem generalised line graphs of Theorem \ref{T:glg_result} are presented as $25$ infinite families and six sporadic graphs, but there are in fact $60$ non-isomorphic infinite families when we consider all the possible options of paths with snake's tongues attached.

Also, as a simple corollary to Theorem \ref{T:glg_result} we can easily note which of the graphs in Figures \ref{F:1salemglg1}--\ref{F:1salemglg3} are line graphs (rather than generalised line graphs) based on the characterization in Theorem \ref{T:glgcond}. These are the graphs where the edges can be partitioned into cliques rather than GCPs and there are $12$ infinite families and $3$ sporadic graphs.

\begin{figure}
\begin{center}
\begin{tabular}{cc}

\begin{tikzpicture}[scale=0.42, auto] 
\foreach \pos/\name in
{{(0,0)/a},{(1.414,0)/b},{(0.707,-0.707)/c},{(0.707,0.707)/d},{(-1,0)/e},{(2.414,0)/h}}
\node[vertex] (\name) at \pos {};
\foreach \edgetype/\source/ \dest in {pedge/a/b,pedge/a/c,pedge/a/e,pedge/a/d,pedge/b/c,pedge/b/d,pedge/b/h}
\path[\edgetype] (\source) -- (\dest);
\end{tikzpicture}

&

\begin{tikzpicture}[scale=0.42, auto] 
\foreach \pos/\name in
{{(0,0)/a},{(1.414,0)/b},{(0.707,-0.707)/c},{(0.707,0.707)/d},{(-1,0)/e},{(2.414,0)/h}}
\node[vertex] (\name) at \pos {};
\foreach \pos/\name in
{{(3.28,0.5)/ha},{(3.28,-0.5)/hb}}
\node[vertex] (\name) at \pos {};
\foreach \edgetype/\source/ \dest in {pedge/a/b,pedge/a/c,pedge/a/e,pedge/a/d,pedge/b/c,pedge/b/d,pedge/b/h,pedge/h/ha,pedge/h/hb}
\path[\edgetype] (\source) -- (\dest);
\end{tikzpicture}
\\
$G_{10}(1,1)$ & $G_{10}(1,\hat{1})$
\end{tabular}
\end{center}

\caption{An illustration of the hat convention.}
\label{F:hatexample}
\end{figure}
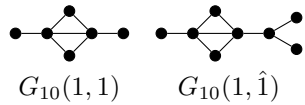

The following trivial extension of \cite[Theorem 3.4]{MS2005} can be used to show that all the graphs in Figures \ref{F:1salemglg1}--\ref{F:1salemglg3} are Salem graphs.

\begin{lemma}\label{L:1salem}
Suppose that $G$ is a non-cyclotomic non-bipartite graph containing a vertex $v$ such that the induced subgraph on $V\left(G\right)\backslash\left\{v\right\}$ is cyclotomic. Also suppose that $G$ is in the family of graphs with least eigenvalue greater than $-2$, then $G$ is a Salem graph.
\end{lemma}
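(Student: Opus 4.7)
The plan is to verify directly the three conditions in Definition \ref{D:Salem} for a non-bipartite Salem graph, namely that $\lambda_1(G) > 2$ and that the remaining $n-1$ eigenvalues of $G$ lie in $[-2,2]$. The lower bound $\lambda_n(G) \geq -2$ is handed to us for free: $G$ belongs to the family of graphs with least eigenvalue greater than $-2$ (in the sense of $\ge -2$, which is the convention used for generalised line graphs in Lemma \ref{L:eigenvalue}(iv) of the paper).

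Next I would use interlacing (Theorem \ref{T:interlacing}) applied to the vertex $v$: writing $H = G \setminus \{v\}$ with eigenvalues $\mu_1 \ge \cdots \ge \mu_{n-1}$, interlacing gives $\lambda_2(G) \le \mu_1 = \lambda_1(H)$. Since by hypothesis $H$ is cyclotomic, all its eigenvalues lie in $[-2,2]$, so $\lambda_1(H) \le 2$ and therefore $\lambda_2(G) \le 2$. Combined with the previous step, every eigenvalue of $G$ except possibly $\lambda_1$ already lies in the required interval $[-2,2]$.

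It remains to show $\lambda_1(G) > 2$. Because $G$ is non-cyclotomic, at least one eigenvalue of $G$ must lie strictly outside $[-2,2]$. The bound $\lambda_n(G) \ge -2$ rules out any eigenvalue below $-2$, and the interlacing argument rules out any $\lambda_i$ with $i \ge 2$ being above $2$. The only remaining possibility is $\lambda_1(G) > 2$, which completes the verification. Since $G$ is also assumed non-bipartite, $G$ satisfies the non-bipartite case of Definition \ref{D:Salem} and is thus a Salem graph.

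The proof is essentially immediate once interlacing is invoked; there is no real obstacle, just the bookkeeping of checking each of the three defining inequalities, and noting that the non-bipartite hypothesis selects the correct clause of Definition \ref{D:Salem} (so the $-2 \le \lambda_n$ bound, rather than the symmetry-based bipartite definition, is what is needed).
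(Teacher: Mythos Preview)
Your proof is correct and complete. The paper itself does not supply a proof of this lemma: it is introduced merely as ``a trivial extension of \cite[Theorem 3.4]{MS2005}'' and stated without argument, so there is no alternative approach to compare against; your interlacing argument is exactly the expected one-line justification.

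One minor caution: your parenthetical appeal to ``Lemma \ref{L:eigenvalue}(iv)'' points to material that sits inside a commented-out block in the paper's source and therefore does not exist in the compiled paper. Since the least-eigenvalue hypothesis is already part of the lemma's statement, that cross-reference is unnecessary and should simply be dropped.
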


To apply Lemma \ref{L:1salem} to the graphs in Figures \ref{F:1salemglg1}--\ref{F:1salemglg3}, we note that all the graphs in that figure are generalised line graphs (using the characterisation in Theorem \ref{T:glgcond}, for example), and in each case one readily spots a vertex $v$ whose deletion leaves a cyclotomic induced subgraph.

We now prove Theorem \ref{T:glg_result} using the structure given in Proposition \ref{L:hamlemma}; that is we will grow our graphs starting with the vertices in $M$, then $M \cup A$, then $M \cup A \cup H$. We quickly note a simple lemma that will be referred to frequently in the proof.

\begin{lemma} \label{L:k5k4}
A $1$-Salem graph $G$ may not contain an induced $K_5$. Also, if it contains an induced $K_4$, then only one of the four vertices in that $K_4$ may be attached to any other vertices in $G$ and the vertex we remove to make $G$ cyclotomic must be this distinguished vertex in $K_4$.
\end{lemma}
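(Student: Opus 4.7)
The plan is to apply interlacing through a short case analysis, after first recording the tiny spectral facts that will do all the work. I would begin by noting that $K_5$ has index $4$ and $K_4$ has index $3$, so that neither is cyclotomic, and that each of the three graphs obtained by joining an extra vertex to a triangle---the ``paw'' (triangle with a pendant), $K_4$ minus an edge, and $K_4$ itself---has index strictly greater than $2$. For the paw, for instance, a direct cofactor expansion gives characteristic polynomial $\lambda^4-4\lambda^2-2\lambda+1$, which is negative at $\lambda=2$, so it has a root exceeding $2$; the other two cases are even easier.

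For the first assertion, suppose for contradiction that $G$ is a $1$-Salem graph containing an induced $K_5$, and let $v$ be a vertex whose removal leaves a cyclotomic subgraph. If $v$ lies outside the $K_5$, then $G\setminus\{v\}$ still has $K_5$ as an induced subgraph; if $v$ is a vertex of the $K_5$, then $G\setminus\{v\}$ has $K_4$ as an induced subgraph. In either case $G\setminus\{v\}$ is non-cyclotomic, contradicting the choice of $v$.

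For the second assertion, the same removal argument shows that the deleted vertex $v$ must be one of the four $K_4$-vertices, say $v=w_1$. It remains to show that no $w_i$ with $i\in\{2,3,4\}$ has a neighbour outside $\{w_1,w_2,w_3,w_4\}$. Suppose for contradiction such a neighbour $u$ of some $w_i$ exists; then the induced subgraph of $G\setminus\{w_1\}$ on $\{u,w_2,w_3,w_4\}$ contains the triangle on $\{w_2,w_3,w_4\}$ plus at least the edge $uw_i$. Splitting on the number (one, two, or three) of $w_2,w_3,w_4$ adjacent to $u$, this induced subgraph is respectively a paw, a $K_4-e$, or a $K_4$; by the preliminary computation each has index exceeding $2$, contradicting cyclotomicity of $G\setminus\{w_1\}$.

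The only real obstacle is making sure no adjacency pattern between $u$ and the triangle $\{w_2,w_3,w_4\}$ is overlooked, but this reduces to the three sub-cases above; once they are listed, the argument is essentially a bookkeeping exercise on the spectra of five very small graphs.
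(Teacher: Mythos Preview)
Your proof is correct and follows essentially the same approach as the paper's: both argue that removing a vertex from an induced $K_5$ (or from outside it) leaves a non-cyclotomic $K_4$ or $K_5$, and that if $v=w_1$ is removed from an induced $K_4$ then any external neighbour of $w_2,w_3,w_4$ would produce a non-cyclotomic induced subgraph extending the surviving $K_3$. The only difference is cosmetic: the paper dispatches the second part with the one-line observation (immediate from Smith's classification) that ``no connected supergraph of $K_3$ is cyclotomic,'' whereas you make this self-contained by explicitly enumerating the three possible $4$-vertex extensions of $K_3$ (paw, $K_4-e$, $K_4$) and checking their indices directly.
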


\begin{proof}
The first sentence is clear since if we remove any one of the vertices of $K_5$ we obtain a $K_4$ which is not cyclotomic. For the second sentence, removing a vertex $v$ of $K_4$ leaves a $K_3$ so if any of the other vertices of the $K_4$ were attached to any vertices of $G$, so would the $K_3$ be after removing $v$, and no connected supergraph of $K_3$ is cyclotomic.
\end{proof}

\subsection{$M$ --- the minimal graphs}

We begin by considering what our minimal graphs may look like.

\begin{proposition} \label{P:triangles}
All Salem generalized line graphs must contain a $K_3$.
\end{proposition}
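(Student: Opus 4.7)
The plan is to prove the contrapositive: a (connected) triangle-free generalized line graph $G$ has $\lambda_1(G) \le 2$ and so cannot be Salem. To start, I would write $G = L(\hat{G})$, where $\hat{G} = G'(a_1,\ldots,a_n)$ is the multigraph obtained from some simple graph $G'$ by adjoining $a_i$ pendant $2$-cycles at each vertex $v_i$, as in the definition from Section \ref{S:background}.

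The first key step is to translate the triangle-free hypothesis into a structural condition on $\hat{G}$. Any vertex $v$ of $\hat{G}$ of multigraph-degree $k$ contributes a clique $K_k$ inside $L(\hat{G})$, because the $k$ edges incident with $v$ are pairwise adjacent as vertices of the line graph (they share the endpoint $v$). Hence if $G$ is triangle-free then every vertex of $\hat{G}$ has multigraph-degree at most $2$. In particular a pendant $2$-cycle can be attached only at an isolated vertex of $G'$, and no vertex may carry more than one pendant $2$-cycle, so $\hat{G}$ decomposes as a disjoint union of simple paths, simple cycles, and isolated pendant $2$-cycles (pairs of vertices joined by two parallel edges).

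Next I would invoke the connectedness of $G$ to conclude that $\hat{G}$ has a unique component bearing edges, and then take line graphs of each allowed type: $L(P_k) = P_{k-1}$, $L(C_k) = C_k$, and the line graph of a $2$-cycle is $K_2$. In every case $G$ itself is either a simple path or a simple cycle.

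Both of these are cyclotomic: cycles appear as the family $\tilde{A}_{n-1}$ in Figure \ref{F:cyclograph} with $\lambda_1 = 2$, while paths are proper induced subgraphs of cycles and hence satisfy $\lambda_1 < 2$ by interlacing (Theorem \ref{T:interlacing}). Either way $\lambda_1(G) \le 2$, contradicting the Salem hypothesis $\lambda_1 > 2$; therefore $G$ must contain a $K_3$. The only mildly delicate point is the correct bookkeeping of multigraph-degrees at the endpoints of pendant $2$-cycles, but this is routine and I do not expect any real obstacle.
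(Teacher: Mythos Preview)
Your approach via the root multigraph $\hat{G}$ is different from the paper's (which uses the GCP-partition characterisation of Theorem~\ref{T:glgcond}), and it can be made to work, but the argument as written has a genuine gap precisely at the point you flagged as ``routine''.

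The claim that a vertex $v$ of $\hat{G}$ of multigraph-degree $k$ contributes a clique $K_k$ in $L(\hat{G})$ is false when some of the edges at $v$ are parallel. By the paper's definition, two edges of $\hat{G}$ are adjacent in $L(\hat{G})$ when they share \emph{exactly one} vertex; the two edges of a pendant $2$-cycle share both endpoints and are therefore \emph{non}-adjacent. Thus at a vertex $v_i$ with $d_i$ simple edges and $a_i$ pendant $2$-cycles, the induced subgraph in $L(\hat{G})$ is $GCP(d_i+2a_i,a_i)$, not $K_{d_i+2a_i}$. In particular, attaching a $2$-cycle to a leaf of $G'$ ($d_i=1$, $a_i=1$) gives only a $P_3$ at that vertex, not a triangle; your assertion that ``a pendant $2$-cycle can be attached only at an isolated vertex of $G'$'' is therefore wrong, and likewise the line graph of a $2$-cycle is $2K_1$, not $K_2$.

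The corrected degree analysis allows, in addition to your cases, the possibilities $(d_i,a_i)=(1,1)$ and $(d_i,a_i)=(0,2)$. These produce, respectively, a path in $G'$ with a $2$-cycle at one or both endpoints (whose line graph is a subgraph of $\tilde{D}_n$) and an isolated vertex carrying two $2$-cycles (whose line graph is $C_4$). All of these are still cyclotomic, so your conclusion survives once the missing cases are included. The paper's GCP-based proof reaches the same endpoint---triangle-free generalised line graphs are subgraphs of $\tilde{A}_n$ or $\tilde{D}_n$---more directly, because the triangle-free GCPs are already enumerated and the partition rules of Theorem~\ref{T:glgcond} make the assembly transparent.
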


\begin{proof}
We know from Theorem \ref{T:glgcond} that generalised line graphs are built from GCPs and it is easy to see that the only GCPs that do not contain a $K_3$ are $GCP\left(1,0\right)$, $GCP\left(2,0\right)$, $GCP\left(2,1\right)$, $GCP\left(3,1\right)$ and $GCP\left(4,2\right)$. These graphs are certainly all cyclotomic and, moreover, all the generalised line graphs that can be made using them (observing the rules in Theorem \ref{T:glgcond}) will be subgraphs of either $\tilde{A}_n$ or $\tilde{D}_n$. However, by definition Salem graphs are non-cyclotomic, so a Salem generalised line graph must include at least one GCP that contains a $K_3$.
\end{proof}

As a corollary to this we get that the minimal graphs we are after must be the three ways of attaching a single vertex to a $K_3$.

\begin{corollary} \label{C:minglgstheorem}
The minimal graphs with respect to the property of being a Salem generalized line graph are the three graphs in Figure \ref{F:mingraphs}.
\end{corollary}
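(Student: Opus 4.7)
The plan is to build directly on Proposition \ref{P:triangles}, which guarantees that every Salem generalized line graph contains an induced $K_3$. Since $K_3 = \tilde{A}_2$ is itself cyclotomic, a minimal Salem generalized line graph $G$ must strictly contain a triangle, and the strategy is to show that $|V(G)| = 4$, with the three possible attachments of the fourth vertex to an induced $K_3$ giving precisely the three graphs of Figure \ref{F:mingraphs}.

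First I would verify that each of the three candidates --- the paw (a triangle with one pendant vertex attached), the diamond $K_4 - e$, and $K_4$ itself --- is a Salem generalized line graph. Each admits an edge-partition into GCPs (in fact cliques) satisfying the conditions of Theorem \ref{T:glgcond}, and a direct spectral computation shows $\lambda_1 > 2$ with all remaining eigenvalues in $[-2, 2]$ (the respective largest eigenvalues being $3$, $(1+\sqrt{17})/2$, and the largest root of a simple degree-$4$ characteristic polynomial). Minimality of each candidate is then immediate, since deleting any vertex yields one of $K_3$, $P_3$, or $P_2 \cup K_1$, all of which are cyclotomic.

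For the converse, let $G$ be a minimal Salem generalized line graph and fix an induced $K_3$ on vertices $T = \{u_1, u_2, u_3\}$. If $|V(G)| \geq 5$, connectedness of $G$ produces some $x \in V(G) \setminus T$ adjacent to a vertex of $T$. Then the induced subgraph $H := G[T \cup \{x\}]$ is the paw, the diamond, or $K_4$, depending on whether $x$ has $1$, $2$, or $3$ neighbours in $T$. By the preceding paragraph $H$ is itself a Salem generalized line graph, and it is a proper induced subgraph of $G$, contradicting the minimality of $G$. Hence $|V(G)| = 4$, and enumerating the connected ways of attaching a fourth vertex to $T$ yields exactly the three graphs of Figure \ref{F:mingraphs}.

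The only real obstacle is the verification that each of the three small candidates has the spectral profile of a Salem graph; this is a routine computation on $4 \times 4$ matrices. The remaining logical content is a short case analysis powered by Proposition \ref{P:triangles} together with the connectedness of $G$.
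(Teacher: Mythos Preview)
Your argument is correct and follows the same route the paper sketches (the paper treats this as a one-line consequence of Proposition~\ref{P:triangles}: the three ways of attaching a single vertex to a $K_3$). Two small corrections: your listed eigenvalues are permuted --- it is $K_4$ that has $\lambda_1 = 3$, while the paw has $\lambda_1 \approx 2.17$ as the largest root of $\lambda^4 - 4\lambda^2 - 2\lambda + 1$ --- and you invoke connectedness of a minimal $G$ without justification, though this follows immediately since a disconnected Salem generalized line graph would have a strictly smaller Salem generalized line graph as one of its components.
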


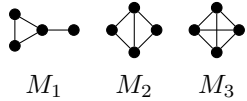
\begin{figure}[h]
\begin{center}
\begin{tabular}{ccc}
\begin{tikzpicture}[scale=0.42, auto]
\foreach \pos/\name in
{{(0,0)/a},{(-1,0)/b},{(-1.866,0.5)/d},{(-1.866,-0.5)/g}}
\node[vertex] (\name) at \pos {};
\foreach \pos/\name in
{{(-0.707,0.707)/z},{(-0.707,-0.707)/zz}}
\node[] (\name) at \pos {};
\foreach \edgetype/\source/ \dest in {pedge/a/b,pedge/b/g,pedge/b/d,pedge/d/g}
\path[\edgetype] (\source) -- (\dest);
\end{tikzpicture}
&
\begin{tikzpicture}[scale=0.42, auto]
\foreach \pos/\name in
{{(0,0)/a},{(-1.414,0)/c},{(-0.707,0.707)/e},{(-0.707,-0.707)/f}}
\node[vertex] (\name) at \pos {};
\foreach \pos/\name in
{{(-0.707,0.707)/z},{(-0.707,-0.707)/zz}}
\node[] (\name) at \pos {};
\foreach \edgetype/\source/ \dest in {pedge/a/e,pedge/a/f,pedge/c/e,pedge/c/f,pedge/e/f}
\path[\edgetype] (\source) -- (\dest);
\end{tikzpicture}
&
\begin{tikzpicture}[scale=0.42, auto]
\foreach \pos/\name in
{{(0,0)/a},{(-1.414,0)/c},{(-0.707,0.707)/e},{(-0.707,-0.707)/f}}
\node[vertex] (\name) at \pos {};
\foreach \pos/\name in
{{(-0.707,0.707)/z},{(-0.707,-0.707)/zz}}
\node[] (\name) at \pos {};
\foreach \edgetype/\source/ \dest in {pedge/a/c,pedge/a/e,pedge/a/f,pedge/c/e,pedge/c/f,pedge/e/f}
\path[\edgetype] (\source) -- (\dest);
\end{tikzpicture}
\\
$M_1$&$M_2$&$M_3$
\end{tabular}
\caption{The three minimal graphs in Corollary \ref{C:minglgstheorem}.}
\label{F:mingraphs}
\end{center}
\end{figure}

We now know what our minimal graphs look like but before proceeding we make one more observation. For $G\setminus \left\{v\right\}$ to be cyclotomic we must have $v\in M$; that is, the vertex we are removing to induce a cyclotomic graph must be one of the vertices of the minimal graph $M$.

\subsection{$A$ --- the adjacent vertices}

Growing our graph from $G\vert_M$ to $G\vert_{M \cup A}$ is the most difficult part, however the restriction to $1$-Salem generalized line graphs reduces this to a finite search, after some work. By Theorem \ref{T:glgcond} we have a highly-constrained structure: we cannot simply add vertices and edges anywhere. First let us consider the ways we can partition $M_1$, $M_2$ and $M_3$ into GCPs; this will reveal where we can add vertices. The generalised line graph $M_1$ can be seen uniquely as a $K_3$ and a $K_2$; it then has three vertices of maximal degree that are not already in two GCPs. And $M_3$ can be seen uniquely as a $K_4$, but by Lemma \ref{L:k5k4} we know that we can attach further vertices to only one of its original vertices. The graph $M_2$, however, is one of the seven graphs that has two non-isomorphic root multigraphs (see \cite[Theorem 2.3.4]{CRS2004}) so we need to consider both versions. Let $M_{2,1}$ be $M_2$ partitioned as a $K_3$ with two $K_2$'s attached and both joined at their other ends; here we only have one vertex that is of maximal degree and not already in two GCPs. Let $M_{2,2}$ be $M_2$ partitioned as a $GCP\left(4,1\right)$ where we then have two vertices of maximal degree.

The set of vertices $A$ are those in the graph $G\vert_{A \cup H}$ that are adjacent to a vertex in $M$.
By Theorem \ref{T:glgcond}, to grow from $G\vert_M$ to $G\vert_{M \cup A}$ we can:
\begin{itemize}
	\item expand a GCP to a larger one that contains the original one (taking care of the degrees of vertices);
	\item attach a new GCP to a vertex of maximal degree that is only in one GCP (attaching only at vertices of maximal degree);
	\item do both of the above.
\end{itemize}
Once we have done this we can then also add an edge between any two vertices of $A$ of maximal degree in their GCPs (in effect, adding a $K_2$) or take two maximal degree vertices of $A$ that are in separate GCPs and only in one GCP each and `merge' them together (connect two disconnected GCPs by making them share an available maximal degree vertex).

Lemma \ref{L:k5k4} helps here as we know that we need not consider any GCPs that contain a $K_5$. In fact, by studying the GCPs $G$ that have this property and looking at the induced graph $G \setminus \left\{v\right\}$ for each $v\in V\left(G\right)$ we find the fairly short list of GCPs that we can attach or expand to in Table \ref{tab:GCPs} below.

\begin{center}
\begin{table}[h]
\begin{tabular}{{m{3cm}m{1.7cm}m{4.4cm}}}
$GCP\left(2,0\right) = K_2$ &
\begin{tikzpicture}[scale=0.42, auto] 
\foreach \pos/\name in
{{(0,0)/a},{(1,0)/b}}
\node[vertex] (\name) at \pos {};
\foreach \edgetype/\source/ \dest in {pedge/a/b}
\path[\edgetype] (\source) -- (\dest);
\foreach \pos/\name in
{{(-0.5,0)/za},{(1.5,0)/zb}}
\node[] (\name) at \pos {};
\end{tikzpicture}
& 2 vertices of maximal degree\\
$GCP\left(3,1\right)$       &
\begin{tikzpicture}[scale=0.42, auto] 
\foreach \pos/\name in
{{(0,0)/a},{(0.866,0.5)/b},{(0.866,-0.5)/c}}
\node[vertex] (\name) at \pos {};
\foreach \edgetype/\source/ \dest in {pedge/a/b,pedge/c/a}
\path[\edgetype] (\source) -- (\dest);
\foreach \pos/\name in
{{(-0.567,0)/za},{(1.433,0)/zb}}
\node[] (\name) at \pos {};
\end{tikzpicture}
& 1 vertex of maximal degree\\
$GCP\left(3,0\right) = K_3$ &
\begin{tikzpicture}[scale=0.42, auto] 
\foreach \pos/\name in
{{(0,0)/a},{(0.866,0.5)/b},{(0.866,-0.5)/c}}
\node[vertex] (\name) at \pos {};
\foreach \edgetype/\source/ \dest in {pedge/a/b,pedge/b/c,pedge/c/a}
\path[\edgetype] (\source) -- (\dest);
\foreach \pos/\name in
{{(-0.567,0)/za},{(1.433,0)/zb}}
\node[] (\name) at \pos {};
\end{tikzpicture}
& 3 vertices of maximal degree\\
$GCP\left(4,1\right)$       &
\begin{tikzpicture}[scale=0.42, auto] 
\foreach \pos/\name in
{{(0,0)/a},{(-1.414,0)/c},{(-0.707,0.707)/e},{(-0.707,-0.707)/f}}
\node[vertex] (\name) at \pos {};
\foreach \edgetype/\source/ \dest in {pedge/a/e,pedge/a/f,pedge/c/e,pedge/c/f,pedge/e/f}
\path[\edgetype] (\source) -- (\dest);
\foreach \pos/\name in
{{(-1.707,0)/za},{(.293,0)/zb}}
\node[] (\name) at \pos {};
\end{tikzpicture}
& 2 vertices of maximal degree\\
$GCP\left(4,0\right) = K_4$ &
\begin{tikzpicture}[scale=0.42, auto] 
\foreach \pos/\name in
{{(0,0)/a},{(-1.414,0)/c},{(-0.707,0.707)/e},{(-0.707,-0.707)/f}}
\node[vertex] (\name) at \pos {};
\foreach \edgetype/\source/ \dest in {pedge/a/c,pedge/a/e,pedge/a/f,pedge/c/e,pedge/c/f,pedge/e/f}
\path[\edgetype] (\source) -- (\dest);
\foreach \pos/\name in
{{(-1.707,0)/za},{(.293,0)/zb}}
\node[] (\name) at \pos {};
\end{tikzpicture}
& 4 vertices of maximal degree\\
$GCP\left(5,2\right)$       &
\begin{tikzpicture}[scale=0.42, auto] 
\foreach \pos/\name in
{{(0,0)/a},{(-0.588,0.809)/b},{(-1.54,0.5)/c},{(-1.54,-0.5)/d},{(-0.588,-0.809)/e}}
\node[vertex] (\name) at \pos {};
\foreach \edgetype/\source/ \dest in {pedge/a/b,pedge/a/c,pedge/a/d,pedge/a/e,pedge/b/e,pedge/b/c,pedge/c/d,pedge/d/e}
\path[\edgetype] (\source) -- (\dest);
\foreach \pos/\name in
{{(-1.77,0)/za},{(.23,0)/zb}}
\node[] (\name) at \pos {};
\end{tikzpicture}
& 1 vertex of maximal degree\\
\end{tabular}
\caption{The GCPs that we can attach or expand to in $A$.}
\label{tab:GCPs}
\end{table}
\end{center}
Note that $GCP\left(4,2\right) = C_4$ is not included; when partitioned as $GCP\left(4,2\right)$ it has no vertices of maximal degree, so cannot be attached to anything and when partitioned as four $K_2$'s it has no vertices that are not already in two GCPs. Also, $GCP\left(3,1\right)$ and $GCP\left(5,2\right)$ are two of the seven graphs that have non-isomorphic root multi-graphs. However, when $GCP\left(5,2\right)$ is partitioned as two $K_3$'s and two $K_2$'s it has no vertices that are not already in two GCPs and when $GCP\left(3,1\right)$ is partitioned as two $K_2$'s one of its vertices will not be in $A$.

The process of going from $G\vert_M$ to $G\vert_{M \cup A}$ is then a finite one; we only have so many GCPs in the minimal graphs $M_1$, $M_{2,1}$, $M_{2,2}$ and $M_3$ to expand and only so many ways to attach these six GCPs to them. We also have a small number of cases where we can add in extra edges between vertices of $A$ or merge them. In working through all these combinations we discard a number of graphs that are not 1-Salem as they require more than one vertex to be removed to make them cyclotomic.

The list of $1$-Salem generalised line graphs $G\vert_{M \cup A}$ has been omitted for reasons of space.
There are $224$ that are distinct as graphs, although many of these graphs can arise in more than one way as $G\vert_{M \cup A}$.  The largest has $11$ vertices.
Initially these were all found by hand; the list was then checked by a computer search.

\subsection{$H$ --- the cyclotomic parts}

We now look at the set $H$ in Lemma \ref{L:hamlemma}. We can reduce our choices by observing that the only cyclotomic graphs that are also generalized line graphs are $\tilde{D}_n$ and $\tilde{A}_n=C_{n+1}$. However, we will show that $G\vert_H$ cannot contain cycles and can only contain subgraphs of $\tilde{D}_n$.  For $n>3$ we note that $\tilde{A}_n$ can be partitioned uniquely as $\left(n+1\right)$ $K_2$'s with each vertex in two GCPs, so we cannot simply attach vertices to it. The other option is then to expand a GCP to a larger one so that we now have vertices only in one GCP to attach to other things. The smallest case is to expand one of the $\left(n+1\right)$ $K_2$'s to a $K_3$. Clearly, this new vertex must be in $A$ rather than $H$ as the graph is no longer cyclotomic. However, in order to make the graph cyclotomic we must remove a vertex but all of the vertices are in $A$ or $H$ and we have shown that the vertex we remove must be in $M$. Cycles of length $3$ can be seen as both three $K_2$'s or one $K_3$ but by similar reasoning on the choice of vertex we are removing, we can exclude this case too. A similar argument holds again for cycles of length $4$ for both ways of partitioning its edges.

The graph $\tilde{D}_n$ can be uniquely partitioned as $\left(n-4\right)$ $K_2$'s with a $GCP\left(2,0\right)$ (or snake's tongue) at either end. However, in this graph each vertex of maximal degree within its GCP is already in two GCPs. If we remove one or both of the snake's tongues we are left with graphs we can work with---a path or a path with a snake's tongue on the end---each with at least one vertex of maximal degree that is only in one GCP. With this in mind we can think of $G\vert_H$ simply consisting of paths of any length, possibly with a snake's tongue on one end.

The final step in growing these graphs is to go from $G\vert_{M \cup A}$ to $G\vert_{M \cup A \cup H} =G$. To any vertices in $A$ of maximal degree and only in one GCP we can attach a single path of arbitrary length (remembering that Lemma \ref{L:k5k4} tells us that we can only attach things to one vertex of a $K_4$). If we so choose, we can join any two of these pendant paths together (equivalently, attach a path or arbitrary length to two different elements of $A$ of maximal degree that are each only in one GCP). Furthermore, on the end of any pendant paths we can include a snake's tongue. These graphs are then precisely the graphs in Figures \ref{F:1salemglg1}--\ref{F:1salemglg3} and Lemma \ref{L:1salem} tells us that they are all 1-Salem, completing this section of the proof.

We can consider letting the lengths of one or more pendant or internal paths tend to infinity. By \cite[Corollary 4.4]{MS2005} the corresponding sequence of graph Salem numbers converges to a Pisot number. By studying the different ways we can let paths tend to infinity, we can produce several explicit families of Pisot numbers.

\section{The $1$-Salem graphs represented in $E_8$}\label{S:E8}

\subsection{The root system and signed graph $E_8$}

There are various concrete descriptions of the root system $E_8$ (see \cite[Chapter 3]{CvL1991}, for example).
Here is one of them.
Let ${\bf e}_1$, \dots, ${\bf e}_8$ be an orthogonal basis for $\mathbb{R}^8$, with each ${\bf e}_i$ having length $\sqrt{2}$.  The root system $E_8$ contains $240$ vectors: the $16$ vectors $\pm{\bf e}_1$, \dots, $\pm{\bf e}_8$, and $224$ vectors of the form $(\pm{\bf e}_i\pm{\bf e}_j\pm{\bf e}_k\pm{\bf e}_l)/2$, where $ijkl$ is one of the $14$ strings $1234$, $1256$, $1278$, $1357$, $1368$, $1467$, $2358$, $2367$, $2457$, $2468$, $3456$, $3478$, $5678$.
The usual inner product of any two vectors from these $240$ is  one of $-2$, $-1$, $0$, $1$, or $2$; and all the vectors have length $2$.
A graph $G$ with adjacency matrix $A$ is said to be represented in $E_8$ if $A+2I$ is the Gram matrix of some subset of the vectors forming the root system $E_8$ (so vectors correspond to vertices, and adjacency of distinct vertices is given by the usual inner product of the corresponding vectors).

The \emph{Weyl group} $W(E_8)$ is the group of isometries of $E_8$ generated by the involutions ${\bf v}\mapsto {\bf v} - ({\bf v}\cdot{\bf v}_i){\bf v}_i$ for each of the $240$ vectors ${\bf v}_i$ (in fact there are only $120$ of these involutions, since ${\bf v}_i$ and $-{\bf v}_i$ yield the same involution).

\subsection{Fast backtracking in $E_8$}\label{SS:backtracking}

Let $P$ be a property of graphs such that if $G$ has property $P$ then so does any induced subgraph.  (Such a property is called \emph{hereditary}.)
The following natural process, familiar to computational graph-theorists, provides a computationally-efficient method for searching for all graphs that: (a) are represented in $E_8$; and (b) have property $P$.

Assign an ordering to the vectors of $E_8$: ${\bf v}_1$, \dots, ${\bf v}_{240}$.
A naive search for all graphs having property $P$ that are represented in $E_8$ would proceed as follows.
Start with an empty list of vectors; look through the vectors in order, adding them to the list so long as the vectors in the list represent a graph that has property $P$; once stuck, backtrack.
The number of graphs represented in $E_8$ is so large that this process is only practical for extremely restrictive properties $P$.  To improve matters, we exploit the isometries of $E_8$.

Choose a $S\subseteq W(E_8)$.  When considering if it is possible to add a new vector to ones list, apply each isometry in $S$ to the potential enlarged list, and reject the new vector if any of the permuted lists appears earlier in the lexicographical ordering derived from our ordering of the vectors.
Larger sets $S$ make each update more expensive, but potentially reduce the backtracking by pruning certain branches at an early stage.

We used a few other programming tricks to speed up the process further, but the main idea was to exploit isometries of $E_8$.  In practice we used a set $S$ of size $162$, including the $120$ involutions above, and $42$ carefully selected others.





\subsection{$1$-Salem graphs represented in $E_8$}

The property of being a $1$-Salem graph is not hereditary, but the property of being either $1$-Salem or cyclotomic is.
We used the fast backtracking approach of \S\ref{SS:backtracking} to find all such graphs represented in $E_8$, and then weeded out those that were cyclotomic, and also those that were generalised line graphs (and hence covered by part (ii) of Theorem \ref{T:summary}).
Table \ref{tab:notglgs} summarises the results, showing the number of non-bipartite connected $1$-Salem graphs that are not generalised line graphs by the number of vertices.
The largest examples have $11$ vertices; they are shown in Figure \ref{fig:11}.
These computations complete the final part of the proof of Theorem \ref{T:summary}, which we state here as a theorem.

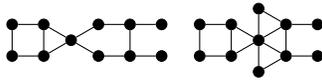
\begin{figure}
\begin{tabular}{cc}
\begin{tikzpicture}[scale=0.42, auto]
\foreach \pos/\name in
{{(-2.732,1)/a},{(-2.732,0)/b},{(-1.732,1)/c},{(-1.732,0)/d},{(-0.866,0.5)/e},{(0,1)/f},{(0,0)/g},{(1,1)/h},{(1,0)/i},{(2,1)/j},{(2,0)/k}}
\node[vertex] (\name) at \pos {};
\foreach \edgetype/\source/ \dest in {pedge/a/b,pedge/a/c,pedge/b/d,pedge/c/d,pedge/c/e,pedge/d/e,pedge/e/f,pedge/e/g,pedge/f/h,pedge/g/i,pedge/h/i,pedge/h/j,pedge/i/k}
\path[\edgetype] (\source) -- (\dest);
\foreach \pos/\name in
{{(-0.866,1.5)/za},{(-0.866,-0.5)/zb}}
\node[] (\name) at \pos {};
\end{tikzpicture}
&
\begin{tikzpicture}[scale=0.42, auto]
\foreach \pos/\name in
{{(-2.732,1)/a},{(-2.732,0)/b},{(-1.732,1)/c},{(-1.732,0)/d},{(-0.866,0.5)/e},{(0,1)/f},{(0,0)/g},{(1,1)/h},{(1,0)/i},{(-0.866,1.5)/j},{(-0.866,-0.5)/k}}
\node[vertex] (\name) at \pos {};
\foreach \edgetype/\source/ \dest in {pedge/a/b,pedge/a/c,pedge/b/d,pedge/c/d,pedge/c/e,pedge/d/e,pedge/e/j,pedge/e/f,pedge/e/g,pedge/e/k,pedge/j/f,pedge/f/g,pedge/g/k,pedge/f/h,pedge/g/i}
\path[\edgetype] (\source) -- (\dest);
\foreach \pos/\name in
{{(-0.866,1.5)/za},{(-0.866,-0.5)/zb}}
\node[] (\name) at \pos {};
\end{tikzpicture}
\end{tabular}
\caption{The two $11$-vertex $1$-Salem graphs that are not generalised line graphs.}\label{fig:11}
\end{figure}

\begin{theorem}\label{T:e8_result}
There are $377$ non-bipartite $1$-Salem graphs that are not generalised line graphs.
The numbers of vertices for these graphs range between $6$ and $11$; the number of graphs for each of these numbers of vertices is shown in Table \ref{tab:notglgs}.
\end{theorem}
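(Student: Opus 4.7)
The plan is essentially computational, relying on the structural dichotomy provided by Cameron, Goethals, Seidel and Shult: every connected graph with least eigenvalue at least $-2$ is either a generalised line graph or is represented in the root system $E_8$. Since every non-bipartite Salem graph has least eigenvalue at least $-2$ (by Definition \ref{D:Salem}), and generalised line graph 1-Salems are already enumerated in Theorem \ref{T:glg_result}, it remains to classify the non-bipartite 1-Salem graphs represented in $E_8$ that fail to be generalised line graphs.

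First I would observe that the property "$G$ is either $1$-Salem or cyclotomic" is hereditary on induced subgraphs: deleting a vertex from a cyclotomic graph leaves a (possibly disconnected) cyclotomic graph, while deleting a vertex from a $1$-Salem graph leaves an induced subgraph whose components are each either $1$-Salem or cyclotomic (by interlacing, Theorem \ref{T:interlacing}, together with the definition of $1$-Salem). This is what makes the backtracking search of \S\ref{SS:backtracking} applicable: one extends a set of vectors in $E_8$ one vector at a time, pruning whenever the induced graph fails to have the hereditary property.

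Next I would run the backtracking procedure of \S\ref{SS:backtracking}, with $S\subseteq W(E_8)$ the set of $162$ isometries already described, enumerating all connected graphs represented in $E_8$ that are either $1$-Salem or cyclotomic. From this list I would discard the cyclotomic graphs (recognised by having all eigenvalues in $[-2,2]$, equivalently by matching against Smith's classification in Figure \ref{F:cyclograph}), the bipartite ones (already covered by Theorem \ref{T:bipartite}), and the generalised line graphs (detected via the edge-partition criterion of Theorem \ref{T:glgcond}, or directly by comparison with the list from Theorem \ref{T:glg_result}). A bound on the size of the search comes from Lemma \ref{L:k5k4} and the basic structure of $E_8$, which together force the vertex count to be modest; the search empirically terminates at $11$ vertices, yielding the two graphs in Figure \ref{fig:11}. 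After deduplication up to graph isomorphism, one tabulates the surviving graphs by vertex count to obtain Table \ref{tab:notglgs} and the total of $377$.

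The main obstacle is ensuring that the computation is both exhaustive and trustworthy. Exhaustiveness requires confidence that the isometry-based pruning in $S$ never rejects a branch that would have produced a genuinely new graph; this is handled by the standard orbit-canonisation argument (pruning only when the current list is lexicographically greater than an $S$-image of itself cannot miss an orbit representative). Trustworthiness is addressed by cross-checks: re-running the search with a smaller set $S$ restricted to just the $120$ involutions to verify identical output, independently testing each output graph for the Salem property by direct eigenvalue computation and for the existence of a single cyclotomic vertex-deletion, and confirming that no output graph appears in the list from Theorem \ref{T:glg_result}. Once these checks pass, the statement of Theorem \ref{T:e8_result} follows directly from the tabulated output.
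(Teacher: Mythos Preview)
Your proposal is correct and follows essentially the same approach as the paper: use the hereditary property ``$1$-Salem or cyclotomic'' to drive the isometry-pruned backtracking search in $E_8$ from \S\ref{SS:backtracking}, then filter out cyclotomic and generalised-line-graph examples. One small remark: discarding bipartite graphs is vacuous here, since a bipartite Salem graph has $\lambda_n=-\lambda_1<-2$ and hence cannot be represented in $E_8$ at all.
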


\begin{table}[h]
\[
\begin{array}{|l||r|r|r|r|r|r|}\hline
\textrm{Number of vertices} & 6 & 7 & 8 & 9 & 10 & 11 \\ \hline
\textrm{Number of graphs} & 10 & 43 & 111 & 153 & 58 & 2 \\ \hline
\end{array}
\]
\caption{The number of non-bipartite $1$-Salem graphs that are not generalised line graphs.}
\label{tab:notglgs}
\end{table}

\section{Acknowledgement}
We are grateful to Jonathan Cooley, who verified several of our computations independently, hugely increasing our confidence in their accuracy.

\end{document}